\theoremstyle{definition}
\newtheorem{corollary}{Corollary}[section]
\newcommand{\mynewtheorem}[2]{
  \newaliascnt{#1}{corollary}
  \newtheorem{#1}[#1]{#2}
  \aliascntresetthe{#1}
  \expandafter\def\csname #1autorefname\endcsname{#2}
}
\DeclareMathOperator{\characteristic}{char}
\begin{document}

\title{Hom-associative Ore extensions and weak unitalizations}
\author{Per B\"ack, Johan Richter, \and Sergei Silvestrov}
\address{Division of Applied Mathematics, The School of Education, Culture and Communication, M\"alar\-dalen  University,  Box  883,  SE-721  23  V\"aster\r{a}s, Sweden}
\email{per.back@mdh.se, johan.richter@mdh.se, \and sergei.silvestrov@mdh.se}
\def\emailaddrname{{\itshape E-mail addresses}}

\subjclass[2010]{17A30, 17A01}
\keywords{hom-associative Ore extensions, hom-associative Weyl algebras, hom-associative algebras}

\begin{abstract}
We introduce hom-associative Ore extensions as non-unital, non-associative Ore extensions with a hom-associative multiplication, and give some necessary and sufficient conditions when such exist. Within this framework, we construct families of hom-associative quantum planes, universal enveloping algebras of a Lie algebra, and Weyl algebras, all being hom-associative generalizations of their classical counterparts, as well as prove that the latter are simple. We also provide a way of embedding any multiplicative hom-associative algebra into a multiplicative, weakly unital hom-associative algebra, which we call a weak unitalization.
\end{abstract}

\maketitle
\section{Introduction}
Hom-Lie algebras and related hom-algebra structures have recently become a subject of growing interest and extensive investigations, in part due to the prospect of providing a general framework in which one can produce many types of natural deformations of (Lie) algebras, in particular $q$-deformations which are of interest both in mathematics and in physics. One of the main initial motivations for this development came from mathematical physics works on $q$-deformations of infinite-dimensional
algebras, primarily the $q$-deformed Heisenberg algebras ($q$-deformed Weyl algebras), oscillator algebras, and the Virasoro algebra
\cite{AizawaSato,ChaiElinPop,ChaiKuLukPopPresn,ChaiKuLuk,ChaiPopPres,CurtrZachos1,CurtrFairlZachos,DaskaloyannisGendefVir,Hu,K92,LiuKQuantumCentExt,LiuKQCharQuantWittAlg,LiuKQPhDthesis}.

Quasi-Lie algebras, subclasses of quasi-hom-Lie algebras, and hom-Lie algebras as well as their general colored (graded) counterparts were introduced between 2003 and 2005 in \cite{HLS,LS1,LS2,LSGradedquasiLiealg,Czech:witt}. Further on, between 2006 and 2008, Makhlouf and Silvestrov introduced the notions of hom-associative algebras, hom-(co, bi)algebras and hom-Hopf algebras, and also studied their properties \cite{MS1,MS2,MS3}. A hom-associative algebra, being a generalization of an associative algebra with the associativity axiom extended by a linear twisting map, is always hom-Lie admissible, meaning that the commutator multiplication in any hom-associative algebra yields a hom-Lie algebra \cite{MS1}. Whereas associativity is replaced by hom-associativity in hom-associative algebras, hom-coassociativity for hom-coalgebras can be considered in a similar way.

One of the main tools in these important developments and in many constructions of examples and classes of hom-algebra structures in physics and in mathematics are based on twisted derivations, or $\sigma$-derivations, which are generalized derivations twisting the Leibniz rule by means of a linear map. These types of twisted derivation maps are central for the associative Ore extension algebras, or rings, introduced in algebra in the 1930s, generalizing crossed product (semidirect product) algebras, or rings, incorporating both actions and twisted derivations.

Non-associative Ore extensions on the other hand were first introduced in 2015 and in the unital case, by Nystedt, {\"O}inert, and Richter \cite{2015arXiv150901436N} (see also \cite{2017arXiv1705.02778} for an extension to monoid Ore extensions). In the present article, we generalize this construction to the non-unital case, as well as investigate when these non-unital, non-associative Ore extensions are hom-associative. Finding necessary and sufficient conditions for such to exist, we are also able to construct families of hom-associative quantum planes (\autoref{ex:hom-quant}), universal enveloping algebras of a Lie algebra (\autoref{ex:hom-env}), and Weyl algebras (\autoref{ex:hom-weyl}), all being hom-associative generalizations of their classical counterparts. We do not make use of any previous results about non-associative Ore extensions, but our construction of hom-associative Weyl algebras has some similarities to the non-associative Weyl algebras in \cite{2015arXiv150901436N}; for instance they both are simple. At last, in \autoref{sec:weak-unitalization}, we prove constructively that any multiplicative hom-associative algebra can be embedded in a multiplicative, weakly unital hom-associative algebra.

\section{Preliminaries}
In this section, we present some definitions and review some results from the theory of hom-associative algebras and that of non-associative Ore extensions.
\subsection{Hom-associative algebras}
Here we define what we mean for an algebraic structure to be \emph{hom-associative}, and review a couple of results concerning the construction of them. First, throughout this paper, by \emph{non-associative} algebras we mean algebras which are not necessarily associative, which includes in particular associative algebras by definition. We also follow the convention of calling a non-associative algebra $A$ \emph{unital} if there exist an element $1\in A$ such that for any element $a\in A$, $a\cdot 1=1\cdot a=a$. By \emph{non-unital} algebras, we mean algebras which are not necessarily unital, which includes also unital algebras as a subclass.

\begin{definition}[Hom-associative algebra]\label{def:hom-assoc-algebra} A \emph{hom-associative algebra} over an associative, commutative, and unital ring $R$, is a triple $(M,\cdot,\alpha)$ consisting of an $R$-module $M$, a binary operation $\cdot\colon M\times M\to M$ linear over $R$ in both arguments, and an $R$-linear map $\alpha\colon M\to M$ satisfying, for all $a,b,c\in M$,
\begin{equation}
\alpha(a)\cdot(b\cdot c)=(a\cdot b)\cdot\alpha(c). \label{eq:hom-condition}
\end{equation}
\end{definition}
Since $\alpha$ twists the associativity, we will refer to it as the \emph{twisting map}, and unless otherwise stated, it is understood that $\alpha$ without any further reference will always denote the twisting map of a hom-associative algebra.
\begin{remark}
A hom-associative algebra over $R$ is in particular a non-unital, non-associative $R$-algebra, and in case $\alpha$ is the identity map, a non-unital, associative $R$-algebra.
\end{remark}

Furthermore, if the twisting map $\alpha$ is also multiplicative, i.e. if $\alpha(a\cdot b) = \alpha(a)\cdot \alpha(b)$ for all elements $a$ and $b$ in the algebra,
then we say that the hom-associative algebra is \emph{multiplicative}.

\begin{definition}[Morphism of hom-associative algebras]\label{def:morphism} A \emph{morphism} between two hom-associative algebras $A$ and $A'$ with twisting maps $\alpha$ and $\alpha'$ respectively, is an algebra homomorphism $f\colon A\to A'$ such that $f\circ \alpha= \alpha'\circ f$. If $f$ is also bijective, the two are \emph{isomorphic}, written $A\cong A'$.
\end{definition}

\begin{definition}[Hom-associative subalgebra] Let $A$ be a hom-associative algebra with twisting map $\alpha$. A \emph{hom-associative subalgebra} $B$ of $A$ is a subalgebra of $A$ that is also a hom-associative algebra with twisting map given by the restriction of $\alpha$ to $B$.
\end{definition}

\begin{definition}[Hom-ideal] A \emph{hom-ideal} of a hom-associative algebra is an algebra ideal $I$ such that $\alpha(I)\subseteq I$.
\end{definition}

In the classical setting, an ideal is in particular a subalgebra. With the above definition, the analogue is also true for a hom-associative algebra, in that a hom-ideal is a hom-associative subalgebra.

\begin{definition}[Hom-simplicity] We say that a hom-associative algebra $A$ is \emph{hom-simple} provided its only hom-ideals are $0$ and $A$.
\end{definition}

In particular, we see that any simple hom-associative algebra is also hom-simple, while the converse need not be true; there may exist ideals that are not invariant under $\alpha$.

\begin{definition}[Hom-associative ring]\label{def:hom-ring} A \emph{hom-associative ring} can be seen as a hom-associative algebra over the ring of integers.
\end{definition}

\begin{definition}[Weakly unital hom-associative algebra]\label{def:weak-hom}
Let $A$ be a hom-associ\-ative algebra. If for all $a\in A$, $e\cdot a=a\cdot e=\alpha(a)$ for some $e\in A$, we say that $A$ is \emph{weakly unital} with \emph{weak unit} $e$.
\end{definition}

\begin{remark}Any unital, hom-associative algebra with twisting map $\alpha$ is weakly unital with weak unit $\alpha(1)$, since by hom-associativity
\begin{equation*}
\alpha(1)\cdot a=\alpha(1)\cdot (1\cdot a)=(1\cdot 1)\cdot\alpha(a)=\alpha(a)=\alpha(a)\cdot(1\cdot 1)=(a\cdot 1)\cdot \alpha(1)=a\cdot\alpha(1).
\end{equation*}
\end{remark}

Any non-unital, associative algebra can be extended to a non-trivial hom-associ\-ative algebra, which the following proposition demonstrates:

\begin{proposition}[\cite{lietheoryyau}]\label{prop:star-alpha-mult} Let $A$ be a non-unital, associative algebra, $\alpha$ an algebra endomorphism on $A$ and define $*\colon A\times A\to A$ by $a* b:=\alpha(a\cdot b)$ for all $a,b\in A$. Then $(A,*,\alpha)$ is a hom-associative algebra.
\end{proposition}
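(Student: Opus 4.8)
The plan is to verify the three ingredients required by \autoref{def:hom-assoc-algebra} for the triple $(A,*,\alpha)$, most of which come for free from the hypotheses. Since $A$ is already an associative $R$-algebra it is in particular an $R$-module, and $\alpha$, being an algebra endomorphism, is in particular $R$-linear; so the only genuine verifications are that $*$ is $R$-bilinear and that the hom-associativity condition \eqref{eq:hom-condition} holds.

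For bilinearity I would simply unwind the definition $a*b=\alpha(a\cdot b)$. Fixing one argument, the map $x\mapsto x\cdot b$ (respectively $x\mapsto a\cdot x$) is $R$-linear because $\cdot$ is bilinear, and post-composing with the $R$-linear map $\alpha$ preserves $R$-linearity. Hence $*$ is $R$-linear in each argument separately. This step is entirely routine.

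The heart of the proof is the hom-associativity identity, and here the key is to use that $\alpha$ is \emph{multiplicative} (an algebra endomorphism) together with the associativity of $\cdot$. First I would expand the left-hand side as $\alpha(a)*(b*c)=\alpha\bigl(\alpha(a)\cdot\alpha(b\cdot c)\bigr)$, then rewrite $\alpha(b\cdot c)=\alpha(b)\cdot\alpha(c)$ and regroup using associativity of $\cdot$ to obtain $\alpha\bigl((\alpha(a)\cdot\alpha(b))\cdot\alpha(c)\bigr)$. Expanding the right-hand side in the same manner gives $(a*b)*\alpha(c)=\alpha\bigl(\alpha(a\cdot b)\cdot\alpha(c)\bigr)=\alpha\bigl((\alpha(a)\cdot\alpha(b))\cdot\alpha(c)\bigr)$, so both sides coincide and \eqref{eq:hom-condition} follows.

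There is no substantial obstacle here; the argument is a direct computation. The only point requiring a little care is bookkeeping: making sure the endomorphism property $\alpha(x\cdot y)=\alpha(x)\cdot\alpha(y)$ is invoked precisely to split the inner products, and that associativity of $\cdot$ is used to match the two bracketings, rather than accidentally relying on a unit (as $A$ is non-unital) or on any twisted associativity of $A$ itself, which is merely associative.
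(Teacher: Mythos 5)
Your proposal is correct and follows essentially the same route as the paper: a direct computation expanding both sides of \eqref{eq:hom-condition} via $a*b=\alpha(a\cdot b)$ and using the endomorphism property of $\alpha$ together with associativity of $\cdot$, with linearity dispatched as routine. The only cosmetic difference is that you split $\alpha(b\cdot c)=\alpha(b)\cdot\alpha(c)$ and regroup to reach the common form $\alpha\bigl((\alpha(a)\cdot\alpha(b))\cdot\alpha(c)\bigr)$, whereas the paper merges products inside $\alpha$ so that both sides equal $\alpha(\alpha(a\cdot b\cdot c))$ --- the same two facts applied in opposite directions.
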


\begin{proof}
Linearity follows immediately, while for all $a,b,c\in A$, we have
\begin{align*}
\alpha(a)* (b* c)&=\alpha(a)* (\alpha(b\cdot c))=\alpha(\alpha(a)\cdot \alpha(b\cdot c))=\alpha(\alpha(a\cdot b\cdot c)),\\
(a* b)* \alpha(c)&=\alpha(a\cdot b)* \alpha(c)=\alpha(\alpha(a\cdot b)\cdot\alpha(c))=\alpha(\alpha(a\cdot b\cdot c)),
\end{align*}
which proves that $(A,* ,\alpha)$ is hom-associative.
\end{proof}
Note that we are abusing the notation in \autoref{def:hom-assoc-algebra} a bit here; $A$ in $(A,*,\alpha)$ does really denote the algebra and not only its module structure. From now on, we will always refer to this construction when writing $*$.

\begin{corollary}[\cite{2009arXiv0904.4874F}]\label{cor:weak-unit} If $A$ is a unital, associative algebra, then $(A,*,\alpha)$ is weakly unital with weak unit 1.
\end{corollary}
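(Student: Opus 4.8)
The plan is to verify the weak unitality condition directly from \autoref{def:weak-hom}, using the definition of the twisted product $*$ supplied by \autoref{prop:star-alpha-mult}. Since $A$ is unital and associative with unit $1$, the candidate weak unit is simply $1$ itself. I would need to check that for every $a \in A$, both $1 * a = \alpha(a)$ and $a * 1 = \alpha(a)$ hold.

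First I would unfold the definition of $*$ on the left side: $1 * a = \alpha(1 \cdot a) = \alpha(a)$, where the inner equality uses that $1$ is a two-sided unit for the underlying associative product $\cdot$. Symmetrically, $a * 1 = \alpha(a \cdot 1) = \alpha(a)$. These two computations together establish that $e \cdot a = a \cdot e = \alpha(a)$ with $e = 1$ in the notation of \autoref{def:weak-hom}, where the product ``$\cdot$'' in that definition is read as the hom-associative product $*$.

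There is essentially no obstacle here: the result is an immediate consequence of $1$ being a genuine unit for $\cdot$ together with the formula $a * b = \alpha(a \cdot b)$. The only point requiring any care is notational bookkeeping, namely keeping straight that the multiplication appearing in the definition of weakly unital (written $\cdot$ there) refers to the twisted product $*$ of the hom-associative algebra $(A, *, \alpha)$, not to the original associative product on $A$. Once that identification is made explicit, the two one-line calculations complete the proof, so I would present it as a short direct verification rather than invoking any further machinery.
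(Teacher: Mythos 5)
Your proof is correct and follows exactly the same route as the paper's own one-line verification: unfold $1*a=\alpha(1\cdot a)=\alpha(a)=\alpha(a\cdot 1)=a*1$ using the definition $a*b=\alpha(a\cdot b)$ from \autoref{prop:star-alpha-mult} and the unitality of $\cdot$. Your extra remark that the product in \autoref{def:weak-hom} must be read as $*$ rather than the original $\cdot$ is a sensible clarification, but the argument itself matches the paper's.
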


\begin{proof}
$1* x=\alpha(1\cdot x)=\alpha(x)=\alpha(x\cdot1)=x* 1.$
\end{proof}

\subsection{Non-unital, non-associative Ore extensions}
In this section, we define non-unital, non-associative Ore extensions, together with some new terminology.
\begin{definition}[Left $R$-additivity] If $R$ is a non-unital, non-associative ring, we say that a map $\beta\colon R\to R$ is \emph{left $R$-additive} if for all $r,s,t\in R$, $r\cdot\beta(s+t)=r\cdot\left(\beta(s)+\beta(t)\right)$.
\end{definition}
In what follows, $\mathbb{N}$ will always denote the set of non-negative integers, and $\mathbb{N}_{>0}$ the set of positive integers. Now, given a non-unital, non-associative ring $R$ with left $R$-additive maps $\delta\colon R\to R$ and $\sigma\colon R\to R$, by a \emph{non-unital, non-associative Ore extension} of $R$, $R[X;\sigma,\delta]$, we mean the set of formal sums $\sum_{i\in\mathbb{N}} a_i X^i,\ a_i\in R,$ called polynomials, with finitely many $a_i$ nonzero, endowed with the addition
\begin{equation*}
\sum_{i\in\mathbb{N}}a_iX^i + \sum_{i\in\mathbb{N}}b_i X^i = \sum_{i\in\mathbb{N}}(a_i+b_i)X^i,\quad a_i,b_i\in R,
\end{equation*}
where two polynomials are equal if and only if their corresponding coefficients are equal, and for all $a,b\in R$ and $m,n\in\mathbb{N}$, a multiplication
\begin{equation}
aX^m\cdot bX^n=\sum_{i\in \mathbb{N}}\left(a\cdot\pi^m_i(b)\right)X^{i+n}.\label{eq:ore-mult}
\end{equation}
Here $\pi_i^m$ denotes the sum of all $\binom{m}{i}$ possible compositions of $i$ copies of $\sigma$ and $m-i$ copies of $\delta$ in arbitrary order. Then, for example $\pi_0^0=\mathrm{id}_R$ and $\pi^3_1=\sigma\circ\delta\circ\delta+\delta\circ\sigma\circ\delta+\delta\circ\delta\circ\sigma$. We also extend the definition of $\pi_i^m$ by setting $\pi^m_i\equiv0$ whenever $i<0$, or $i>m$. Imposing distributivity of the multiplication over addition makes $R[X;\sigma,\delta]$ a ring. In the special case when $\sigma=\mathrm{id}_R$, we say that $R[X;\mathrm{id}_R,\delta]$ is a \emph{non-unital, non-associative differential polynomial ring}, and when $\delta\equiv0$, $R[X;\sigma,0]$ is said to be a \emph{non-unital, non-associative skew polynomial ring}.

Note that when $m=n=0$, $aX^0\cdot bX^0 = \sum_{i\in\mathbb{N}}\left(a\cdot\pi_i^0(b)\right)X^i=(a\cdot b)X^0,$ so $R\cong RX^0$ by the isomorphism $r\mapsto rX^0$ for any $r\in R$. Since $RX^0$ is a subring of  $R[X;\sigma,\delta]$, we can view $R$ as a subring of $R[X;\sigma,\delta]$, making sense of expressions like $a\cdot bX^0$.

\begin{remark}If $R$ contains a unit, we write $X$ for the formal sum $\sum_{i\in\mathbb{N}}a_i X^i$ with $a_1=1$ and $a_i=0$ when $i\neq 1$. It does not necessarily make sense to think of $X$ as an element of the non-associative Ore extension if $R$ is not unital.
\end{remark}
The left-distributivity of the multiplication over addition forces $\delta$ and $\sigma$ to be left $R$-additive: for any $r,s,t\in R$, $rX\cdot(s+t)=rX\cdot s+rX\cdot t$, and by expanding the left- and right-hand side,
\begin{align*}
rX\cdot(s+t)&=r\cdot\sigma(s+t)X+r\cdot\delta(s+t),\\
rX\cdot s+rX\cdot t&=r\cdot\sigma(s)X+r\cdot\delta(s)+r\cdot\sigma(t)X+r\cdot\delta(t),
\end{align*}
so by comparing coefficients, we arrive at the desired conclusion.

\begin{definition}[$\sigma$-derivation]\label{def:sigma-derivation} Let $R$ be a non-unital, non-associative ring where $\sigma$ is an endomorphism and $\delta$ an additive map on $R$. Then $\delta$ is called a \emph{$\sigma$-derivation} if $\delta(a\cdot b)=\sigma(a)\cdot\delta(b)+\delta(a)\cdot b$ holds for all $a,b\in R$. If $\sigma=\mathrm{id}_R$, $\delta$ is a $\emph{derivation}$.
\end{definition}

\begin{remark}\label{re:sigma-derivation}
If $R$ and $\sigma$ are unital and $\delta$ a $\sigma$-derivation, then $\delta(1)=\delta(1\cdot 1)=2\cdot\delta(1)$, so that $\delta(1)=0$. Furthermore, if $R$ is also associative, then it is both a necessary and sufficient condition that $\sigma$ be an endomorphism and $\delta$ a $\sigma$-derivation on $R$ for the unital, associative Ore extension $R[X;\sigma,\delta]$ to exist.
\end{remark}

\begin{definition}[Homogeneous map] Let $R[X;\sigma,\delta]$ be a non-unital non-associ\-ative Ore extension of a non-unital, non-associative ring $R$. Then we say that a map $\beta\colon R[X;\sigma,\delta]\to R[X;\sigma,\delta]$ is \emph{homogeneous} if for all $a\in R$ and $m\in\mathbb{N}$, $\beta(aX^m)=\beta(a)X^m$. If $\gamma\colon R\to R$ is any (additive) map, we may \emph{extend it homogeneously} to $R[X;\sigma,\delta]$ by defining $\gamma(aX^m):=\gamma(a)X^m$ (imposing additivity).
\end{definition}

\section{Non-associative Ore extensions of non-associative rings}
We use this small section to present a couple of results that hold true for any non-unital, non-associative Ore extension of a non-unital, non-associative ring.

\begin{lemma}[Homogeneously extended ring endomorphism]\label{lem:endo-extension}
Let $R[X;\sigma,\delta]$ be a non-unital, non-associative Ore extension of a non-unital, non-associative ring $R$. If $\gamma$ is an endomorphism on $R$, then the homogeneously extended map is an endomorphism on $R[X;\sigma,\delta]$ if and only if
\begin{equation}
\gamma(a)\cdot\pi_i^m(\gamma(b))=\gamma(a)\cdot\gamma(\pi_i^m(b)),\quad\text{for all } i,m\in\mathbb{N}\text{ and } a,b\in R.\label{eq:homogeneous-endomorphism}
\end{equation}
\end{lemma}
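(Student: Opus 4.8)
The plan is to observe at the outset that the homogeneous extension of $\gamma$ is additive by construction and that multiplication in $R[X;\sigma,\delta]$ distributes over addition; hence the extended map is a ring endomorphism of $R[X;\sigma,\delta]$ if and only if it respects products of \emph{monomials}, that is, $\gamma(aX^m\cdot bX^n)=\gamma(aX^m)\cdot\gamma(bX^n)$ for all $a,b\in R$ and $m,n\in\mathbb{N}$. I would therefore reduce the entire equivalence to this single monomial identity and then expand both sides using the multiplication formula \eqref{eq:ore-mult}.

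For the left-hand side, applying \eqref{eq:ore-mult}, then additivity and homogeneity of the extended $\gamma$, and finally the multiplicativity of $\gamma$ on $R$ (which holds throughout, since $\gamma$ is assumed an endomorphism of $R$), gives
\begin{align*}
\gamma(aX^m\cdot bX^n)&=\gamma\Bigl(\sum_{i\in\mathbb{N}}\bigl(a\cdot\pi_i^m(b)\bigr)X^{i+n}\Bigr)=\sum_{i\in\mathbb{N}}\gamma\bigl(a\cdot\pi_i^m(b)\bigr)X^{i+n}\\
&=\sum_{i\in\mathbb{N}}\bigl(\gamma(a)\cdot\gamma(\pi_i^m(b))\bigr)X^{i+n}.
\end{align*}
For the right-hand side, homogeneity gives $\gamma(aX^m)=\gamma(a)X^m$ and $\gamma(bX^n)=\gamma(b)X^n$, so \eqref{eq:ore-mult} yields
\begin{equation*}
\gamma(aX^m)\cdot\gamma(bX^n)=\gamma(a)X^m\cdot\gamma(b)X^n=\sum_{i\in\mathbb{N}}\bigl(\gamma(a)\cdot\pi_i^m(\gamma(b))\bigr)X^{i+n}.
\end{equation*}

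I would then compare coefficients. Since two polynomials are equal precisely when all corresponding coefficients agree, and the exponents $i+n$ are distinct for distinct $i$, the two displayed sums coincide if and only if $\gamma(a)\cdot\gamma(\pi_i^m(b))=\gamma(a)\cdot\pi_i^m(\gamma(b))$ for every $i\in\mathbb{N}$; as $a,b,m$ are arbitrary and $n$ merely shifts the exponent, this is exactly condition \eqref{eq:homogeneous-endomorphism}. Reading the equivalence in one direction shows the condition is sufficient (together with additivity and distributivity, to pass back from monomials to arbitrary polynomials), and in the other direction that it is necessary.

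The computation itself is routine; the point that requires care is the asymmetry between the two sides. Multiplicativity of $\gamma$ on $R$ can only be invoked \emph{after} the inner product $a\cdot\pi_i^m(b)$ has been formed, which is why one side carries $\gamma(\pi_i^m(b))$ while the other carries $\pi_i^m(\gamma(b))$. Because $R$ is only assumed non-associative, left multiplication by $\gamma(a)$ need not be injective, so \eqref{eq:homogeneous-endomorphism} is genuinely weaker than the naive commutation relation $\gamma\circ\pi_i^m=\pi_i^m\circ\gamma$; recognising that the condition must be stated after multiplying by $\gamma(a)$, rather than as an identity of maps on $R$, is the main conceptual subtlety here.
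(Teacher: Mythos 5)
Your proof is correct and takes essentially the same approach as the paper: reduce to monomials via additivity, expand $\gamma(aX^m\cdot bX^n)$ and $\gamma(aX^m)\cdot\gamma(bX^n)$ using \eqref{eq:ore-mult} together with multiplicativity of $\gamma$ on $R$, and compare coefficients of $X^{i+n}$. Your closing observation---that non-associativity of $R$ means left multiplication by $\gamma(a)$ need not be cancellative, so \eqref{eq:homogeneous-endomorphism} is genuinely weaker than $\gamma\circ\pi_i^m=\pi_i^m\circ\gamma$---is accurate and explains why the lemma is stated as it is, though it is not needed for the argument itself.
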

\begin{proof}
Additivity follows from the definition, while for any monomials $aX^m$ and $bX^n$,
\begin{align*}
\gamma(aX^m\cdot bX^n)&=\gamma\left(\sum_{i\in\mathbb{N}}a\cdot\pi_i^m(b)X^{i+n}\right)=\sum_{i\in\mathbb{N}}\gamma(a)\cdot\gamma\big(\pi_i^m(b)\big)X^{i+n},\\
\gamma(aX^m)\cdot\gamma(bX^n)&=\gamma(a)X^m\cdot\gamma(b)X^n=\sum_{i\in\mathbb{N}}\gamma(a)\cdot\pi_i^m(\gamma(b))X^{i+n}.
\end{align*}
Comparing coefficients between the two completes the proof.
\end{proof}

\begin{corollary}[Homogeneously extended unital ring endomorphism]\label{cor:unital-endo-extension} Let $R[X;\sigma,\delta]$ be a unital, non-associative Ore extension of a unital, non-associative ring $R$. If $\alpha$ is an endomorphism on $R$ and there exists an $a\in R$ such that $\alpha(a)=1$, then the homogeneously extended map on $R[X;\sigma,\delta]$ is an endomorphism if and only if $\alpha$ commutes with $\delta$ and $\sigma$.
\end{corollary}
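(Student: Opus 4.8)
The plan is to reduce everything to the criterion already established in \autoref{lem:endo-extension}, which (with $\gamma=\alpha$) says that the homogeneously extended map is an endomorphism precisely when
\begin{equation*}
\alpha(a)\cdot\pi_i^m(\alpha(b))=\alpha(a)\cdot\alpha(\pi_i^m(b))\quad\text{for all }i,m\in\mathbb{N}\text{ and }a,b\in R.
\end{equation*}
So it suffices to prove that this family of identities is equivalent to the pair of commutation relations $\alpha\circ\sigma=\sigma\circ\alpha$ and $\alpha\circ\delta=\delta\circ\alpha$.

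For the direction starting from the commutation relations, I would first observe that since $\alpha$ is additive and commutes with both $\sigma$ and $\delta$, it commutes with every composition of copies of $\sigma$ and $\delta$ (by a trivial induction on the length of the composition), and hence, again by additivity, with the sum $\pi_i^m$. Thus $\alpha(\pi_i^m(b))=\pi_i^m(\alpha(b))$ for all $i,m$ and $b$, and both sides of the displayed identity collapse to $\alpha(a)\cdot\pi_i^m(\alpha(b))$. Note that this direction uses neither unitality nor any assumption on the image of $\alpha$.

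For the converse, the key is to exploit the hypothesis that $\alpha(a)=1$ for some $a\in R$ in order to cancel the left factor. Specializing the displayed identity to this particular $a$ and using that $R$ is unital, so that $1\cdot x=x$, yields $\pi_i^m(\alpha(b))=\alpha(\pi_i^m(b))$ for all $i,m\in\mathbb{N}$ and $b\in R$. It then remains only to read off the two commutation relations by specializing $m$: taking $m=1$ and $i=1$ gives $\sigma(\alpha(b))=\alpha(\sigma(b))$, since $\pi_1^1=\sigma$, while $m=1$ and $i=0$ gives $\delta(\alpha(b))=\alpha(\delta(b))$, since $\pi_0^1=\delta$, which is exactly what we want.

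The step I expect to be the crux is this cancellation in the converse direction. In a non-associative, merely unital ring there is no cancellation law in general, so one cannot simply divide out an arbitrary $\alpha(a)$; the only reason the argument goes through is that the hypothesis supplies an element mapping to $1$, after which left-unitality performs the cancellation for free. This also clarifies why the assumption on the image of $\alpha$ is present: without some element mapping to $1$, the criterion of \autoref{lem:endo-extension} is genuinely weaker than commutativity of $\alpha$ with $\sigma$ and $\delta$.
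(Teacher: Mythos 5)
Your proposal is correct and follows essentially the same route as the paper's own proof: both reduce the statement to the criterion of \autoref{lem:endo-extension}, obtain the forward direction from $\alpha$ commuting with every composition (hence with $\pi_i^m$), and for the converse specialize to the element $a$ with $\alpha(a)=1$ so that left unitality cancels the left factor, then read off the two commutation relations from the cases $(m,i)=(1,0)$ and $(m,i)=(1,1)$. Your added remarks (the induction on composition length, and the observation that the hypothesis on the image of $\alpha$ is what substitutes for the missing cancellation law) are correct elaborations of steps the paper leaves implicit.
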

\begin{proof}
This follows from \autoref{lem:endo-extension} by choosing $a$ so that $\alpha(a)=1$: if $\alpha$ commutes with $\delta$ and $\sigma$, then $\pi_i^m(\alpha(b))=\alpha(\pi_i^m(b))$. On the other hand, if $\pi_i^m(\alpha(b))=\alpha(\pi_i^m(b))$, then by choosing $m=1$ and $i=0$, $\pi_0^1(\alpha(b))=\delta(\alpha(b)),$ and $\alpha(\pi_0^1(b))=\alpha(\delta(b)).$ If choosing $m=1$ and $i=1$,
$\pi_1^1(\alpha(b))=\sigma(\alpha(b)),$ $\alpha(\pi_1^1(b))=\alpha(\sigma(b)).$\qedhere
\end{proof}

\section{Hom-associative Ore extensions of non-associative rings}
The following section is devoted to the question what non-unital, non-associative Ore extensions of non-unital, non-associative rings $R$ are hom-associative?

\begin{proposition}[Hom-associative Ore extension]\label{prop:hom-ass-ore-condition}
Let $R[X;\sigma,\delta]$ be a non-unital, non-associative Ore extension of a non-unital, non-associative ring $R$. Furthermore, let $\alpha_{i,j}(a)\in R$ be dependent on $a\in R$ and $i,j\in\mathbb{N}_{>0}$, and put for an additive map $\alpha\colon R[X;\sigma,\delta]\to R[X;\sigma,\delta]$,
\begin{equation}
\alpha\left(aX^m\right)=\sum_{i\in\mathbb{N}} \alpha_{i+1,m+1}(a)X^i, \quad\forall a\in R,\forall m\in\mathbb{N}.\label{eq:alpha-definition}
\end{equation}
Then $R[X;\sigma,\delta]$ is hom-associative with the twisting map $\alpha$ if and only if for all $a,b,c\in R$ and $k,m,n,p\in\mathbb{N}$,
\begin{equation}
\sum_{j\in\mathbb{N}}\sum_{i\in\mathbb{N}}\alpha_{i+1,m+1}(a)\cdot\pi_{k-j}^i\left(b\cdot\pi_{j-p}^n(c)\right) = \sum_{j\in\mathbb{N}}\sum_{i\in\mathbb{N}}\left(a\cdot\pi_i^m(b)\right)\cdot \pi_{k-j}^{i+n}\left(\alpha_{j+1,p+1}(c)\right).\label{eq:hom-ore-general}
\end{equation}
\end{proposition}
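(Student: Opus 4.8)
The plan is to reduce hom-associativity from arbitrary polynomials to monomials, and then to match coefficients of $X^k$ term by term. First I would observe that multiplication in $R[X;\sigma,\delta]$ is additive in each argument (both distributive laws hold in a ring) and that $\alpha$ is additive by hypothesis. Consequently both sides of the hom-associativity identity \eqref{eq:hom-condition} are additive in each of the three slots. Since every polynomial is a finite sum of monomials $aX^m$ with $a\in R$, $m\in\mathbb{N}$, it therefore suffices to verify \eqref{eq:hom-condition} when $a$, $b$, $c$ are replaced by monomials $aX^m$, $bX^n$, $cX^p$; conversely, once it holds on monomials, additivity in each slot propagates it to all of $R[X;\sigma,\delta]$.

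Next I would expand each side on monomials using the Ore multiplication \eqref{eq:ore-mult} and the definition \eqref{eq:alpha-definition} of $\alpha$. For the left-hand side $\alpha(aX^m)\cdot(bX^n\cdot cX^p)$, I would first compute the inner product $bX^n\cdot cX^p$, recording its $X^j$-component as $\bigl(b\cdot\pi_{j-p}^n(c)\bigr)X^j$, then left-multiply by $\alpha(aX^m)=\sum_{i}\alpha_{i+1,m+1}(a)X^i$ and apply \eqref{eq:ore-mult} once more. Extracting the coefficient of $X^k$ yields exactly $\sum_{j}\sum_{i}\alpha_{i+1,m+1}(a)\cdot\pi_{k-j}^i\bigl(b\cdot\pi_{j-p}^n(c)\bigr)$, the left-hand side of \eqref{eq:hom-ore-general}. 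Symmetrically, for $(aX^m\cdot bX^n)\cdot\alpha(cX^p)$ I would expand $aX^m\cdot bX^n=\sum_i\bigl(a\cdot\pi_i^m(b)\bigr)X^{i+n}$, right-multiply by $\alpha(cX^p)=\sum_j\alpha_{j+1,p+1}(c)X^j$, and read off the coefficient of $X^k$ to obtain $\sum_{j}\sum_{i}\bigl(a\cdot\pi_i^m(b)\bigr)\cdot\pi_{k-j}^{i+n}\bigl(\alpha_{j+1,p+1}(c)\bigr)$, the right-hand side of \eqref{eq:hom-ore-general}.

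The main obstacle is purely the bookkeeping of the nested summations and the reindexing of exponents. The clean way to manage this is to let a single index $j$ always denote the total $X$-degree of the relevant inner factor, so that the shifts by $n$ and $p$ surface as $\pi_{j-p}^n$ (and the $\alpha$-indices $j+1,p+1$), and so that extracting the $X^k$-coefficient from the outer product produces $\pi_{k-j}$. Throughout I would lean on the stated convention $\pi_i^m\equiv 0$ for $i<0$ or $i>m$: this guarantees each double sum has only finitely many nonzero terms and lets me write every summation over $\mathbb{N}$ without carrying explicit range constraints, which is what makes the two coefficient expressions line up with \eqref{eq:hom-ore-general} as written.

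Finally I would assemble the equivalence. Two polynomials are equal precisely when their $X^k$-coefficients agree for every $k$, so hom-associativity on the monomials $aX^m$, $bX^n$, $cX^p$ is equivalent to the equality of the two coefficient expressions above for all $k\in\mathbb{N}$, which is exactly \eqref{eq:hom-ore-general} ranging over all $a,b,c\in R$ and $k,m,n,p\in\mathbb{N}$. Combined with the first-paragraph reduction, this gives both implications: if \eqref{eq:hom-ore-general} holds then \eqref{eq:hom-condition} holds on monomials and hence, by additivity, on all of $R[X;\sigma,\delta]$; and if $R[X;\sigma,\delta]$ is hom-associative then specializing \eqref{eq:hom-condition} to monomials and comparing coefficients forces \eqref{eq:hom-ore-general}. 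This completes the proof of \autoref{prop:hom-ass-ore-condition}.
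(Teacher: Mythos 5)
Your proposal is correct and follows essentially the same route as the paper: expand $\alpha(aX^m)\cdot(bX^n\cdot cX^p)$ and $(aX^m\cdot bX^n)\cdot\alpha(cX^p)$ via \eqref{eq:ore-mult} and \eqref{eq:alpha-definition}, reindex so that the coefficient of $X^k$ surfaces as the two double sums, and compare coefficients, with the convention $\pi_i^m\equiv 0$ for $i<0$ or $i>m$ handling finiteness. Your explicit first-paragraph reduction to monomials via additivity in all three slots is left implicit in the paper's proof but is exactly the right justification, so the two arguments coincide in substance.
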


\begin{proof}
For any $a,b,c\in R$ and $m,n,p\in\mathbb{N}$,
{\allowdisplaybreaks
\begin{align*}
\alpha\left(aX^m\right)\cdot \left(b X^n \cdot cX^p\right)&=\alpha\left(aX^m\right)\cdot\left(\sum_{q\in\mathbb{N}}\left(b\cdot\pi_q^n(c)\right)X^{q+p}\right)\\
&=\sum_{q\in\mathbb{N}}\alpha\left(aX^m\right)\cdot\left(\left(b\cdot\pi_q^n(c)\right)X^{q+p}\right)\\
&=\sum_{q\in\mathbb{N}}\sum_{i\in\mathbb{N}}\alpha_{i+1,m+1}(a)X^i\cdot\left(\left(b\cdot\pi_q^n(c)\right)X^{q+p}\right)\\
&=\sum_{q\in\mathbb{N}}\sum_{i\in\mathbb{N}}\sum_{l\in\mathbb{N}}\alpha_{i+1,m+1}(a)\cdot\pi_l^i\left(b\cdot\pi_q^n(c)\right)X^{l+q+p}\\
&=\sum_{l\in\mathbb{N}}\sum_{j\in\mathbb{N}}\sum_{i\in\mathbb{N}}\alpha_{i+1,m+1}(a)\cdot\pi_l^i\left(b\cdot\pi_{j-p}^n(c)\right)X^{l+j}\\
&=\sum_{k\in\mathbb{N}}\sum_{j\in\mathbb{N}}\sum_{i\in\mathbb{N}}\alpha_{i+1,m+1}(a)\cdot\pi_{k-j}^i\left(b\cdot\pi_{j-p}^n(c)\right)X^{k},\\
\left(aX^m\cdot bX^n\right)\cdot \alpha\left(cX^p\right)&=\left(\sum_{i\in\mathbb{N}}\left(a\cdot\pi_i^m(b)\right)X^{i+n}\right)\cdot \alpha\left(cX^p\right)\\
&=\sum_{i\in\mathbb{N}}\left(a\cdot\pi_i^m(b)\right)X^{i+n}\cdot \alpha\left(cX^p\right)\\
&=\sum_{i\in\mathbb{N}}\left(a\cdot\pi_i^m(b)\right)X^{i+n}\cdot \sum_{j\in\mathbb{N}} \alpha_{j+1,p+1}(c)X^j\\
&=\sum_{i\in\mathbb{N}}\sum_{j\in\mathbb{N}}\left(a\cdot\pi_i^m(b)\right)X^{i+n}\cdot \alpha_{j+1,p+1}(c)X^j\\
&=\sum_{i\in\mathbb{N}}\sum_{j\in\mathbb{N}}\sum_{l\in\mathbb{N}}\left(a\cdot\pi_i^m(b)\right)\cdot \pi_{l}^{i+n}\left(\alpha_{j+1,p+1}(c)\right)X^{l+j}\\
&=\sum_{k\in\mathbb{N}}\sum_{j\in\mathbb{N}}\sum_{i\in\mathbb{N}}\left(a\cdot\pi_i^m(b)\right)\cdot \pi_{k-j}^{i+n}\left(\alpha_{j+1,p+1}(c)\right)X^{k}.
\end{align*}}
Comparing coefficients completes the proof.
\end{proof}

\begin{corollary}\label{cor:hom-ore-ness} Let $R[X;\sigma,\delta]$ be a non-unital, hom-associative Ore extension of a non-unital, non-associative ring $R$, with twisting map defined by \eqref{eq:alpha-definition}. Then the following assertions hold for all $a,b,c\in R$ and $k,p\in\mathbb{N}$:
{\allowdisplaybreaks
\begin{align}
\sum_{i=k-p}^{I_{0,a}}\alpha_{i+1,1}(a)\cdot\pi_{k-p}^i(b\cdot c)=&(a\cdot b)\cdot\alpha_{k+1,p+1}(c),\label{eq:hom-ness-1}\\
\sum_{i=k-p-1}^{I_{0,a}}\alpha_{i+1,1}(a)\cdot\left(\pi_{k-p-1}^i(b\cdot\sigma(c))\right)\phantom{=}\nonumber\\
+\sum_{i=k-p}^{I_{0,a}}a_{i+1,1}\left(\pi_{k-p}^i(b\cdot\delta(c))\right)=&(a\cdot b)\cdot\left(\delta(\alpha_{k+1,p+1}(c))+\sigma(\alpha_{k,p+1}(c))\right)\nonumber\\
=&(a\cdot b)\cdot\left(\alpha_{k+1,p+1}(\delta(c))+\alpha_{k,p+1}(\sigma(c))\right)\hspace{-.5ex},\label{eq:hom-ness-2}\\
\sum_{i=k-p}^{I_{1,a}}\alpha_{i+1,2}(a)\cdot\pi^i_{k-p}(b\cdot c)=&\left(a\cdot\sigma(b)\right)\cdot\left(\delta(\alpha_{k+1,p+1}(c))+\sigma(\alpha_{k,p+1}(c))\right)\nonumber\\
&+(a\cdot\delta(b))\cdot\alpha_{k+1,p+1}(c),\label{eq:hom-ness-3}
\end{align}}
where $\alpha_{0,p+1}(\cdot):=0$, and $I_{p,a}$ is the smallest natural number, depending on $p$ and $a$, such that $\alpha_{i+1,p}(a)=0$ for all $i> I_{p,a}$.
\end{corollary}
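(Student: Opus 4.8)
The plan is to obtain each of the three identities by specializing the general hom-associativity condition \eqref{eq:hom-ore-general} of \autoref{prop:hom-ass-ore-condition} to a single small choice of the exponents $m$ and $n$, and then collapsing the nested sums using the explicit low-order behaviour of the operators $\pi_i^m$. The three elementary facts I would invoke repeatedly are: $\pi_i^0(b)$ equals $b$ when $i=0$ and vanishes otherwise (since $\pi_0^0=\mathrm{id}_R$); $\pi_0^1=\delta$ and $\pi_1^1=\sigma$; and $\pi_i^m\equiv 0$ whenever $i<0$ or $i>m$. The last of these, together with the finiteness built into \eqref{eq:alpha-definition}, is exactly what produces the stated summation ranges: the factor $\pi_{k-p}^i$ forces $i\geq k-p$ (the lower limit), while $\alpha_{i+1,m+1}(a)$ vanishes for large $i$ (the upper limit, $I_{m,a}$).

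First I would derive \eqref{eq:hom-ness-1} by setting $m=n=0$. On the left, $\pi_{j-p}^0(c)$ is nonzero only for $j=p$, where it equals $c$, collapsing the $j$-sum; on the right, $\pi_i^0(b)$ forces $i=0$ and then $\pi_{k-j}^0$ forces $j=k$, leaving the single term $(a\cdot b)\cdot\alpha_{k+1,p+1}(c)$. For \eqref{eq:hom-ness-3} I would set $m=1$, $n=0$: now the right-hand side splits according to $\pi_i^1(b)$, giving an $a\cdot\delta(b)$ contribution from $i=0$ and an $a\cdot\sigma(b)$ contribution from $i=1$, and within the latter $\pi_{k-j}^1$ again separates into a $\delta(\alpha_{k+1,p+1}(c))$ term (from $j=k$) and a $\sigma(\alpha_{k,p+1}(c))$ term (from $j=k-1$), matching the stated right-hand side. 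Here one must keep the convention $\alpha_{0,p+1}(\cdot):=0$ in mind so that the $j=k-1$ contribution is well defined when $k=0$.

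Equation \eqref{eq:hom-ness-2} requires the most care, and this is where I expect the main (if modest) obstacle to lie, since its right-hand side is asserted in two different forms. I would first take $m=0$, $n=1$: on the left, $\pi_{j-p}^1(c)$ is nonzero only for $j=p$ (giving $\delta(c)$) and $j=p+1$ (giving $\sigma(c)$), producing exactly the two sums on the left of \eqref{eq:hom-ness-2}, while on the right $\pi_i^0(b)$ forces $i=0$ and $\pi_{k-j}^1$ yields $(a\cdot b)\cdot\bigl(\delta(\alpha_{k+1,p+1}(c))+\sigma(\alpha_{k,p+1}(c))\bigr)$, the first form. To reach the second form I would not re-specialize but instead feed the already-proved identity \eqref{eq:hom-ness-1} back into the left-hand side: applying it with $c$ replaced by $\delta(c)$ rewrites the $\delta$-sum as $(a\cdot b)\cdot\alpha_{k+1,p+1}(\delta(c))$, and applying it with $c$ replaced by $\sigma(c)$ and $k$ replaced by $k-1$ rewrites the $\sigma$-sum as $(a\cdot b)\cdot\alpha_{k,p+1}(\sigma(c))$; equating the two evaluations of the same left-hand side then gives the full chain of equalities. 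The only bookkeeping to watch throughout is the consistent reindexing $j\mapsto j-p$ inherited from the proof of \autoref{prop:hom-ass-ore-condition}, and keeping the degree bounds $I_{m,a}$ aligned with the vanishing of the coefficients $\alpha_{i+1,m+1}(a)$.
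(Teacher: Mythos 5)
Your proposal is correct and follows essentially the same route as the paper: the paper likewise obtains \eqref{eq:hom-ness-1}, the first equality in \eqref{eq:hom-ness-2}, and \eqref{eq:hom-ness-3} by specializing \eqref{eq:hom-ore-general} to the cases $m=n=0$, $m=0,n=1$, and $m=1,n=0$, and gets the second equality in \eqref{eq:hom-ness-2} by comparison with \eqref{eq:hom-ness-1}, exactly as you do via the substitutions $c\mapsto\delta(c)$ and $c\mapsto\sigma(c)$, $k\mapsto k-1$. Your write-up merely makes explicit the collapsing of the sums via $\pi_0^0=\mathrm{id}_R$, $\pi_0^1=\delta$, $\pi_1^1=\sigma$, the vanishing conventions, and the role of $\alpha_{0,p+1}(\cdot):=0$, which the paper leaves implicit.
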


\begin{proof} We get \eqref{eq:hom-ness-1}, the first equality in \eqref{eq:hom-ness-2}, and \eqref{eq:hom-ness-3} immediatly from the cases $m=n=0$, $m=0, n=1$, and $m=1,n=0$ in \eqref{eq:hom-ore-general}, respectively. The second equality in \eqref{eq:hom-ness-2} follows from comparison with \eqref{eq:hom-ness-1}.
\end{proof}

\begin{remark} In case $k<p$, or $k>I_{0,a}$ in \eqref{eq:hom-ness-1}, $(a\cdot b)\cdot\alpha_{k+1,p+1}(c)=0$. The statement is analogous for \eqref{eq:hom-ness-2} and \eqref{eq:hom-ness-3}.
\end{remark}

\begin{corollary}
Let $R[X;\sigma,\delta]$ be a non-unital, hom-associative Ore extension of a non-unital, non-associative ring $R$, with twisting map defined by \eqref{eq:alpha-definition}. Then the following assertions hold for all $a,b,c\in R$ and $j,p\in\mathbb{N}$:
\begin{align}
(a\cdot b)\cdot \sigma(\alpha_{I+1,p+1}(c))&=(a\cdot b)\cdot\alpha_{I+1,p+1}(\sigma(c)),\quad I=\max(I_{p,c}, I_{p,\delta(c)}),\label{eq:assertion1}\\
(a\cdot b)\cdot\delta(\alpha_{1,p+1}(c))&=(a\cdot b)\cdot \alpha_{1,p+1}(\delta(c))=\begin{cases}(a\cdot b)\cdot \alpha_{j+1,j+1}(\delta(c))&\text{if } p=0,\\ 0&\text{if }p\neq0.\label{eq:assertion2}\end{cases}
\end{align}
\end{corollary}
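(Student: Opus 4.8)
The plan is to obtain both assertions by specialising the master identities \eqref{eq:hom-ness-1} and \eqref{eq:hom-ness-2} of \autoref{cor:hom-ore-ness} to well-chosen values of $k$ and $p$; no fresh hom-associativity computation is required. In every step I would distribute $(a\cdot b)\cdot(-)$ over the finite sums appearing on both sides (legitimate since $R$ is a ring) and use the additivity of $\sigma$ and $\delta$, the convention $\alpha_{0,p+1}(\cdot)=0$, and the cut-off that $\alpha_{i+1,p}(a)$ vanishes once $i>I_{p,a}$.

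For \eqref{eq:assertion1} I would read the second equality of \eqref{eq:hom-ness-2} as the relation $(a\cdot b)\cdot\delta(\alpha_{k+1,p+1}(c))+(a\cdot b)\cdot\sigma(\alpha_{k,p+1}(c))=(a\cdot b)\cdot\alpha_{k+1,p+1}(\delta(c))+(a\cdot b)\cdot\alpha_{k,p+1}(\sigma(c))$, valid for every $k$. Evaluating it at the top index $k=I+1$, the two terms carrying the subscript $k+1=I+2$ drop out: by the choice of $I$ one has $\alpha_{I+2,p+1}(c)=0$ and $\alpha_{I+2,p+1}(\delta(c))=0$, and $\delta(0)=0$ removes the first of these as well. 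What remains is precisely $(a\cdot b)\cdot\sigma(\alpha_{I+1,p+1}(c))=(a\cdot b)\cdot\alpha_{I+1,p+1}(\sigma(c))$, i.e. \eqref{eq:assertion1}.

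For \eqref{eq:assertion2}, the first equality is the same relation taken at the bottom index $k=0$: there both terms carrying $\alpha_{0,p+1}$ vanish (directly, or through $\sigma(0)=0$), leaving $(a\cdot b)\cdot\delta(\alpha_{1,p+1}(c))=(a\cdot b)\cdot\alpha_{1,p+1}(\delta(c))$. For the final evaluation I would instead invoke \eqref{eq:hom-ness-1} with $c$ replaced by $\delta(c)$. Taking $k=0$ gives $(a\cdot b)\cdot\alpha_{1,p+1}(\delta(c))=\sum_{i}\alpha_{i+1,1}(a)\cdot\pi_{-p}^{i}(b\cdot\delta(c))$; when $p\neq 0$ the lower index $-p$ is negative, so every $\pi_{-p}^{i}\equiv 0$ and the right-hand side is $0$. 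When $p=0$ I would use the diagonal case $k=p$ of \eqref{eq:hom-ness-1} (again with $c\mapsto\delta(c)$), where $\pi_{k-p}^{i}=\pi_{0}^{i}=\delta^{i}$, so that $(a\cdot b)\cdot\alpha_{j+1,j+1}(\delta(c))=\sum_{i}\alpha_{i+1,1}(a)\cdot\delta^{i}(b\cdot\delta(c))$ is visibly independent of $j$; comparing the values at $j$ and at $j=0$ yields $(a\cdot b)\cdot\alpha_{j+1,j+1}(\delta(c))=(a\cdot b)\cdot\alpha_{1,1}(\delta(c))$, which is the $p=0$ branch.

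Since each claim is a targeted substitution into \autoref{cor:hom-ore-ness}, I expect no conceptual difficulty, only bookkeeping. The main thing to get right is, in \eqref{eq:assertion1}, the identification of which coefficients are forced to vanish at $k=I+1$ --- this is exactly where the precise reading of the cut-offs $I_{p,c}$ and $I_{p,\delta(c)}$ enters, and it is the step most prone to an off-by-one error. In \eqref{eq:assertion2} the only subtlety is noticing that the sign of the lower index of $\pi$ governs the collapse of the $p\neq 0$ sum, while for $p=0$ the diagonal right-hand side has shed its dependence on the degree.
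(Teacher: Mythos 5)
Your proposal is correct and takes essentially the same route as the paper, whose entire proof reads: ``Put $k=\max(I_{p,c}, I_{p,\delta(c)})$ and $k=0$ in \eqref{eq:hom-ness-2}, respectively.'' Your two refinements actually tighten this: choosing $k=I+1$ (rather than the paper's literal $k=I$) is what makes the $\alpha_{I+2,p+1}$-terms vanish under the cutoff convention used in the sums of \autoref{cor:hom-ore-ness}, so it is the substitution that genuinely yields \eqref{eq:assertion1} as stated, and your explicit appeal to \eqref{eq:hom-ness-1} with $c\mapsto\delta(c)$ at $k=0$ (for $p\neq0$, where the negative lower index kills $\pi^i_{-p}$) and at the diagonal $k=p=j$ (for $p=0$, where the $j$-independent left-hand side gives $(a\cdot b)\cdot\alpha_{j+1,j+1}(\delta(c))$) supplies the case split in \eqref{eq:assertion2} that the paper leaves implicit.
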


\begin{proof}
Put $k=\max(I_{p,c}, I_{p,\delta(c)})$ and $k=0$ in \eqref{eq:hom-ness-2}, respectively.
\end{proof}

\section{Hom-associative Ore extensions of hom-associative rings}
In this section, we will continue our previous investigation, but narrowed down to hom-associative Ore extensions of hom-associative rings.
\begin{corollary}\label{cor:hom-alpha-condition}Let $R[X;\sigma,\delta]$ be a non-unital, non-associative Ore extension of a non-unital, hom-associative ring $R$, and extend the twisting map $\alpha\colon R\to R$ homogeneously to $R[X;\sigma,\delta]$. Then $R[X;\sigma,\delta]$ is hom-associative if and only if for all $a,b,c\in R$ and $l,m,n\in\mathbb{N}$,
\begin{equation}
\sum_{i\in\mathbb{N}} \alpha(a)\cdot\pi_i^m\left(b\cdot\pi_{l-i}^n(c)\right) = \sum_{i\in\mathbb{N}}\left(a\cdot\pi_i^m(b)\right)\cdot\pi_l^{i+n}\left(\alpha(c)\right).\label{eq:pi-function-sum}
\end{equation}
\end{corollary}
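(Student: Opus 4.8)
The plan is to specialize the general hom-associativity criterion from \autoref{prop:hom-ass-ore-condition} to the case where the twisting map is the homogeneous extension of $\alpha\colon R\to R$. The key observation is that extending $\alpha$ homogeneously means precisely that $\alpha(aX^m)=\alpha(a)X^m$, so in the notation of \eqref{eq:alpha-definition} we have $\alpha_{i+1,m+1}(a)=\alpha(a)$ when $i=m$ and $\alpha_{i+1,m+1}(a)=0$ otherwise. First I would substitute this choice of coefficients into \eqref{eq:hom-ore-general} and watch the double sums collapse: on the left-hand side the sum over $i$ kills every term except $i=m$, and on the right-hand side the sum over $j$ kills every term except $j=p$.

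Carrying this out, the left-hand side of \eqref{eq:hom-ore-general} becomes $\sum_{j\in\mathbb{N}}\alpha(a)\cdot\pi_{k-j}^m\bigl(b\cdot\pi_{j-p}^n(c)\bigr)$, since only $i=m$ survives. The right-hand side becomes $\sum_{i\in\mathbb{N}}\bigl(a\cdot\pi_i^m(b)\bigr)\cdot\pi_{k-p}^{i+n}\bigl(\alpha(c)\bigr)$, since only $j=p$ survives. I would then perform a change of summation index to match the stated form of \eqref{eq:pi-function-sum}: on the left, replace the outer index $j$ by $i$ together with the substitution $k\mapsto$ an appropriate relabeling so that the two inner compositions read $\pi_i^m$ and $\pi_{l-i}^n$; concretely, writing $l$ for the residual free index $k-p$ (and absorbing the shift by $p$ into the definition of $l$) converts $\pi_{k-j}^m$ into $\pi_i^m$ and $\pi_{j-p}^n$ into $\pi_{l-i}^n$ after renaming $j-p$. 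On the right, the single surviving term is already $\sum_{i}\bigl(a\cdot\pi_i^m(b)\bigr)\cdot\pi_l^{i+n}(\alpha(c))$ with $l=k-p$.

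The main obstacle, and the step requiring the most care, is the bookkeeping of these index shifts so that the free parameters $k,p\in\mathbb{N}$ of \eqref{eq:hom-ore-general} are correctly repackaged into the single free parameter $l\in\mathbb{N}$ of \eqref{eq:pi-function-sum}. I would argue that, because $\pi_{k-j}^i\equiv 0$ whenever $k-j<0$ and $\pi_{j-p}^n\equiv 0$ whenever $j-p<0$, the only contributing values satisfy $j\ge p$, so setting $l:=k-p$ and relabeling the inner index turns the constraint $k,p\in\mathbb{N}$ arbitrary into $l\in\mathbb{N}$ arbitrary, with no loss of generality and no extra identities. Once the two sides have been rewritten in this form, comparing them yields exactly \eqref{eq:pi-function-sum}, and since \autoref{prop:hom-ass-ore-condition} is an if-and-only-if statement, so is the resulting corollary.
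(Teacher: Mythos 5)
Your proposal is correct and follows essentially the same route as the paper's own proof: it specializes \autoref{prop:hom-ass-ore-condition} via $\alpha_{i+1,m+1}(a)=\alpha(a)\,\delta_{i,m}$ (Kronecker delta), collapses the double sums to $i=m$ on the left and $j=p$ on the right, and reindexes with $l=k-p$, using the vanishing of $\pi$ with negative lower index to justify that the two free parameters $k,p$ repackage into a single free $l\in\mathbb{N}$. The index bookkeeping you flag as the delicate step is handled exactly as in the paper, so no gap remains.
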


\begin{proof}A homogeneous $\alpha$ corresponds to $\alpha_{i+1,m+1}(a)=\alpha(a)\cdot\delta_{i,m}$ and $\alpha_{j+1,p+1}(c)=\alpha(c)\cdot\delta_{j,p}$ in \autoref{prop:hom-ass-ore-condition}, where $\delta_{i,m}$ is the Kronecker delta. Then the left-hand side reads
\begin{align*}
\sum_{j\in\mathbb{N}}\sum_{i\in\mathbb{N}}\alpha_{i+1,m+1}(a)\cdot\pi_{k-j}^i\left(b\cdot\pi_{j-p}^n(c)\right)&=\sum_{j\in\mathbb{N}}\sum_{i\in\mathbb{N}}\alpha(a)\cdot\delta_{i,m}\cdot\pi_{k-j}^i\left(b\cdot\pi_{j-p}^n(c)\right)\\
&=\sum_{j\in\mathbb{N}}\alpha(a)\cdot\pi_{k-j}^m\left(b\cdot\pi_{j-p}^n(c)\right)\\
&=\sum_{i\in\mathbb{N}}\alpha(a)\cdot\pi_i^m\left(b\cdot\pi_{k-p-i}^n(c)\right)\\
&=\sum_{i\in\mathbb{N}}\alpha(a)\cdot\pi_i^m\left(b\cdot\pi_{l-i}^n(c)\right),
\end{align*}
and the right-hand side
\begin{align*}
\sum_{j\in\mathbb{N}}\sum_{i\in\mathbb{N}}\left(a\cdot\pi_i^m(b)\right)\cdot \pi_{k-j}^{i+n}\left(\alpha_{j+1,p+1}(c)\right)&=\sum_{j\in\mathbb{N}}\sum_{i\in\mathbb{N}}\left(a\cdot\pi_i^m(b)\right)\cdot \pi_{k-j}^{i+n}\left(\alpha(c)\cdot\delta_{j,p}\right)\\
&=\sum_{i\in\mathbb{N}}\left(a\cdot\pi_i^m(b)\right)\cdot \pi_{k-p}^{i+n}\left(\alpha(c)\right)\\
&=\sum_{i\in\mathbb{N}}\left(a\cdot\pi_i^m(b)\right)\cdot \pi_{l}^{i+n}\left(\alpha(c)\right),
\end{align*}
which completes the proof.
\end{proof}

\begin{corollary}Let $R[X;\sigma,\delta]$ be a non-unital, hom-associative Ore extension of a non-unital, hom-associative ring $R$, with the twisting map $\alpha\colon R\to R$ extended homogeneously to $R[X;\sigma,\delta]$. Then, for all $a,b,c\in R$,
\begin{align}
(a\cdot b)\cdot\delta(\alpha(c))&=(a\cdot b)\cdot\alpha(\delta(c)),\label{eq:homogeneous-delta-commute}\\
(a\cdot b)\cdot\sigma(\alpha(c))&=(a\cdot b)\cdot\alpha(\sigma(c)),\label{eq:homogeneous-sigma-commute}\\
\alpha(a)\cdot\delta(b\cdot c)&=\alpha(a)\cdot(\delta(b)\cdot c+\sigma(b)\cdot\delta(c)),\label{eq:homogeneous-sigma-derivation}\\
\alpha(a)\cdot\sigma(b\cdot c)&=\alpha(a)\cdot\left(\sigma(b)\cdot\sigma(c)\right).\label{eq:homogeneous-sigma-homomorphism}
\end{align}
\end{corollary}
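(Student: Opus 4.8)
The plan is to read off the four identities from the low-degree specializations already packaged in \autoref{cor:hom-ore-ness}, feeding in the homogeneous twisting map. Concretely, extending $\alpha$ homogeneously corresponds to $\alpha_{i+1,m+1}(a)=\alpha(a)\,\delta_{i,m}$ (Kronecker delta), exactly as in the proof of \autoref{cor:hom-alpha-condition}, so in \eqref{eq:hom-ness-1}--\eqref{eq:hom-ness-3} every coefficient $\alpha_{i+1,1}(a)$ collapses to its $i=0$ term $\alpha(a)$, and every $\alpha_{i+1,2}(a)$ to its $i=1$ term $\alpha(a)$; similarly the right-hand factors $\alpha_{k+1,p+1}(c)$ and $\alpha_{k,p+1}(c)$ survive only when $k=p$ and $k=p+1$, respectively. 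Thus I would evaluate each identity at the two indices $k=p$ and $k=p+1$, which are the only ones producing nonzero content.

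First I would obtain \eqref{eq:homogeneous-delta-commute} and \eqref{eq:homogeneous-sigma-commute} from \eqref{eq:hom-ness-2}. After the collapse, the left-hand side is governed by the index constraints on $\pi$ (recall $\pi_i^m\equiv0$ for $i<0$ or $i>m$, and $\pi_0^0=\mathrm{id}_R$): at $k=p$ only the $\delta(c)$-sum contributes, giving $\alpha(a)\cdot(b\cdot\delta(c))$ on the left and $(a\cdot b)\cdot\delta(\alpha(c))$ on the right, while at $k=p+1$ only the $\sigma(c)$-sum contributes, giving $\alpha(a)\cdot(b\cdot\sigma(c))$ and $(a\cdot b)\cdot\sigma(\alpha(c))$. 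Comparing each with the hom-associativity axiom \eqref{eq:hom-condition} of $R$, namely $\alpha(a)\cdot(b\cdot\delta(c))=(a\cdot b)\cdot\alpha(\delta(c))$ and $\alpha(a)\cdot(b\cdot\sigma(c))=(a\cdot b)\cdot\alpha(\sigma(c))$, immediately yields \eqref{eq:homogeneous-delta-commute} and \eqref{eq:homogeneous-sigma-commute}.

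Next I would treat \eqref{eq:hom-ness-3}, whose left-hand coefficient survives at $i=1$, where $\pi_{k-p}^1$ equals $\delta$ when $k=p$ and $\sigma$ when $k=p+1$. At $k=p$ this reads $\alpha(a)\cdot\delta(b\cdot c)=(a\cdot\sigma(b))\cdot\delta(\alpha(c))+(a\cdot\delta(b))\cdot\alpha(c)$; I would then use the already-proved \eqref{eq:homogeneous-delta-commute} to replace $\delta(\alpha(c))$ by $\alpha(\delta(c))$, and apply \eqref{eq:hom-condition} twice (with $b\mapsto\sigma(b),\,c\mapsto\delta(c)$ and with $b\mapsto\delta(b)$) to pull $\alpha(a)$ out of both terms, producing \eqref{eq:homogeneous-sigma-derivation}. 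At $k=p+1$ the identity becomes $\alpha(a)\cdot\sigma(b\cdot c)=(a\cdot\sigma(b))\cdot\sigma(\alpha(c))$, and combining \eqref{eq:homogeneous-sigma-commute} with one application of \eqref{eq:hom-condition} gives \eqref{eq:homogeneous-sigma-homomorphism}.

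The calculations themselves are routine once the surviving terms are identified; accordingly the one genuinely delicate point is the bookkeeping of which summands remain after the Kronecker-delta collapse, in particular respecting the shifted lower limits ($k-p$ versus $k-p-1$) and the vanishing of $\pi_i^m$ outside $0\le i\le m$. The logical ordering is also essential: \eqref{eq:homogeneous-delta-commute} and \eqref{eq:homogeneous-sigma-commute} must be established before \eqref{eq:homogeneous-sigma-derivation} and \eqref{eq:homogeneous-sigma-homomorphism}, since they are precisely the inputs that convert the $\delta(\alpha(c))$ and $\sigma(\alpha(c))$ appearing in \eqref{eq:hom-ness-3} into the $\alpha(\delta(c))$ and $\alpha(\sigma(c))$ needed to refactor through hom-associativity.
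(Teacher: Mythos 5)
Your proposal is correct and is essentially the paper's own (very terse) proof spelled out: the paper simply says to apply the technique of \autoref{cor:hom-alpha-condition} — the Kronecker-delta substitution $\alpha_{i+1,m+1}(a)=\alpha(a)\delta_{i,m}$ — to \autoref{cor:hom-ore-ness}, which is exactly your collapse of \eqref{eq:hom-ness-2} and \eqref{eq:hom-ness-3} at the only nonvanishing indices $k=p$ and $k=p+1$. Your explicit index bookkeeping, the use of hom-associativity \eqref{eq:hom-condition} of $R$ to refactor the surviving terms (equivalently, the specialized \eqref{eq:hom-ness-1} or the second equality in \eqref{eq:hom-ness-2}), and the ordering that establishes \eqref{eq:homogeneous-delta-commute}--\eqref{eq:homogeneous-sigma-commute} before \eqref{eq:homogeneous-sigma-derivation}--\eqref{eq:homogeneous-sigma-homomorphism} all faithfully fill in the details the paper leaves implicit.
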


\begin{proof}Using the same technique as in the proof of \autoref{cor:hom-alpha-condition}, this follows from \autoref{cor:hom-ore-ness} with a homogeneous $\alpha$.
\end{proof}

For the two last equations, it is worth noting the resemblance to the unital and associative case (see the latter part of \autoref{re:sigma-derivation}).

\begin{corollary}
Assume $\alpha\colon R\to R$ is the twisting map of a non-unital, hom-associative ring $R$, and extend the map homogeneously to $R[X;\sigma, \delta]$. Assume further that $\alpha$ commutes with $\delta$ and $\sigma$. Then $R[X;\sigma,\delta]$ is hom-associative if and only if for all $a,b,c\in R$ and $l,m,n\in \mathbb{N}$,
\begin{equation}
\alpha(a)\cdot\sum_{i\in\mathbb{N}}\pi_i^m\left(b\cdot\pi_{l-i}^n(c)\right)=\alpha(a)\cdot\sum_{i\in\mathbb{N}}\left(\pi_i^m(b)\cdot\pi_l^{i+n}(c)\right).\label{eq:homogeneous-iff-condition}
\end{equation}
\end{corollary}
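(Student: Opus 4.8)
The plan is to reduce everything to \autoref{cor:hom-alpha-condition}, which already asserts that, with $\alpha$ extended homogeneously, $R[X;\sigma,\delta]$ is hom-associative if and only if \eqref{eq:pi-function-sum} holds for all $a,b,c\in R$ and $l,m,n\in\mathbb{N}$. It therefore suffices to show that, under the extra hypothesis that $\alpha$ commutes with $\sigma$ and $\delta$, equation \eqref{eq:pi-function-sum} is, for each fixed choice of parameters, literally the same equality as \eqref{eq:homogeneous-iff-condition}. I would prove this by matching the two sides of the two equations separately.

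For the left-hand sides, I would note that each inner sum is finite, since for fixed $m$ only finitely many $\pi_i^m$ are nonzero, so ordinary left distributivity in $R[X;\sigma,\delta]$ lets me factor $\alpha(a)$ out of the sum. This rewrites $\sum_{i}\alpha(a)\cdot\pi_i^m(b\cdot\pi_{l-i}^n(c))$ as $\alpha(a)\cdot\sum_{i}\pi_i^m(b\cdot\pi_{l-i}^n(c))$, which is exactly the left-hand side of \eqref{eq:homogeneous-iff-condition}. Notably, this step uses neither hom-associativity of $R$ nor the commuting hypothesis.

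The right-hand sides are where both hypotheses enter. First, since $\alpha$ is additive and commutes with $\sigma$ and $\delta$, it commutes with every composition of copies of $\sigma$ and $\delta$, hence with each summand defining $\pi_l^{i+n}$, and therefore $\pi_l^{i+n}(\alpha(c))=\alpha(\pi_l^{i+n}(c))$. Substituting this into the right-hand side of \eqref{eq:pi-function-sum} turns each summand into $(a\cdot\pi_i^m(b))\cdot\alpha(\pi_l^{i+n}(c))$. I would then apply the hom-associativity of $R$, i.e. \eqref{eq:hom-condition}, summand-by-summand with the substitutions $a$, $\pi_i^m(b)$, $\pi_l^{i+n}(c)$, giving $(a\cdot\pi_i^m(b))\cdot\alpha(\pi_l^{i+n}(c))=\alpha(a)\cdot(\pi_i^m(b)\cdot\pi_l^{i+n}(c))$. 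Summing over $i$ and again pulling $\alpha(a)$ out of the finite sum by left distributivity yields precisely the right-hand side of \eqref{eq:homogeneous-iff-condition}.

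Combining the two matchings shows that \eqref{eq:pi-function-sum} and \eqref{eq:homogeneous-iff-condition} coincide term for term for every choice of $a,b,c,l,m,n$, so one holds universally exactly when the other does; chaining this equivalence with \autoref{cor:hom-alpha-condition} completes the argument. I do not anticipate a genuine obstacle here: the essential insight is to invoke hom-associativity of $R$ on each summand rather than attempting to apply it globally to the whole sum. The only points requiring care are the purely bookkeeping observations that the sums are finite, so that distributivity is legitimately applied, and that commutation with $\sigma$ and $\delta$ propagates to commutation with each $\pi_l^{i+n}$.
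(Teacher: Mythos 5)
Your proposal is correct and follows essentially the same route as the paper: both reduce to \autoref{cor:hom-alpha-condition}, rewrite $\pi_l^{i+n}(\alpha(c))$ as $\alpha(\pi_l^{i+n}(c))$ using that additivity of $\alpha$ propagates its commutation with $\sigma$ and $\delta$ to each $\pi_l^{i+n}$, apply hom-associativity of $R$ termwise, and finally pull $\alpha(a)$ out of the finite sums by left-distributivity. Your explicit remarks on finiteness of the sums and on matching the left-hand sides are careful bookkeeping the paper leaves implicit, but the argument is identical in substance.
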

\begin{proof}
Using \autoref{cor:hom-alpha-condition}, we know that $R[X;\sigma,\delta]$ is hom-associative if and only if for all $a,b,c\in R$ and $l,m,n\in\mathbb{N}$,
\begin{equation*}
\sum_{i\in\mathbb{N}}\alpha(a)\cdot\pi_i^m\left(b\cdot\pi_{l-i}^n(c)\right)=\sum_{i\in\mathbb{N}}\left(a\cdot\pi_i^m(b)\right)\cdot\pi_l^{i+n}\left(\alpha(c)\right).
\end{equation*}
However, since $\alpha$ commutes with both $\delta$ and $\sigma$, and $R$ is hom-associative, the right-hand side can be rewritten as
\begin{align*}
\sum_{i\in\mathbb{N}}\left(a\cdot\pi_i^m(b)\right)\cdot\pi_l^{i+n}\left(\alpha(c)\right)&=\sum_{i\in\mathbb{N}}\left(a\cdot\pi_i^m(b)\right)\cdot\alpha\left(\pi_l^{i+n}(c)\right)\\
&=\sum_{i\in\mathbb{N}}\alpha(a)\cdot\left(\pi_i^m(b)\cdot\pi_l^{i+n}(c)\right).
\end{align*}
As a last step, we use left-distributivity to pull out $\alpha(a)$ from the sums.
\end{proof}

\begin{proposition}\label{prop:extended-ore} Assume $\alpha\colon R\to R$ is the twisting map of a non-unital, hom-associative ring $R$, and extend the map homogeneously to $R[X;\sigma, \delta]$. Assume further that $\alpha$ commutes with $\delta$ and $\sigma$, and that $\sigma$ is an endomorphism and $\delta$ a $\sigma$-derivation. Then $R[X;\sigma,\delta]$ is hom-associative.
\end{proposition}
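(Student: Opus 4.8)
The plan is to reduce the statement to the combinatorial criterion \eqref{eq:homogeneous-iff-condition} supplied by the preceding corollary, and then to verify that criterion by proving a purely operator-theoretic identity for the maps $\pi_i^m$. Concretely, since $\alpha$ is homogeneous and commutes with $\delta$ and $\sigma$, the preceding corollary tells us that $R[X;\sigma,\delta]$ is hom-associative if and only if
\begin{equation*}
\alpha(a)\cdot\sum_{i\in\mathbb{N}}\pi_i^m\left(b\cdot\pi_{l-i}^n(c)\right)=\alpha(a)\cdot\sum_{i\in\mathbb{N}}\left(\pi_i^m(b)\cdot\pi_l^{i+n}(c)\right)
\end{equation*}
for all $a,b,c\in R$ and $l,m,n\in\mathbb{N}$. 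Hence it suffices to establish the stronger, $\alpha$-free identity
\begin{equation*}
\sum_{i\in\mathbb{N}}\pi_i^m\left(b\cdot\pi_{l-i}^n(c)\right)=\sum_{i\in\mathbb{N}}\pi_i^m(b)\cdot\pi_l^{i+n}(c),
\end{equation*}
which holds in $R$ itself; applying $\alpha(a)\cdot(-)$ and using left-distributivity then yields the criterion.

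To prove this identity I would isolate two facts about the operators $\pi_i^m$. The first is a twisted Leibniz rule for products: for all $b,d\in R$ and all $i,m\in\mathbb{N}$,
\begin{equation*}
\pi_i^m(b\cdot d)=\sum_{j\in\mathbb{N}}\pi_j^m(b)\cdot\pi_i^j(d).
\end{equation*}
I would prove this by induction on $m$, using the recursion $\pi_i^{m+1}=\pi_{i-1}^m\circ\sigma+\pi_i^m\circ\delta$ obtained by conditioning on the innermost (first-applied) factor of each composition, together with the hypotheses that $\sigma$ is an endomorphism and $\delta$ a $\sigma$-derivation (\autoref{def:sigma-derivation}). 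The base case $m=0$ is immediate, and in the inductive step one expands $\sigma(b\cdot d)=\sigma(b)\cdot\sigma(d)$ and $\delta(b\cdot d)=\sigma(b)\cdot\delta(d)+\delta(b)\cdot d$, applies the induction hypothesis, and reassembles the resulting terms by applying the same recursion to $\pi_k^{m+1}(b)$ and to $\pi_i^{j+1}(c)$. The second fact is a purely combinatorial Vandermonde-type composition rule, independent of any algebraic hypotheses on $\sigma$ and $\delta$: for all $j,n,l\in\mathbb{N}$,
\begin{equation*}
\sum_{i\in\mathbb{N}}\pi_i^j\circ\pi_{l-i}^n=\pi_l^{j+n}.
\end{equation*}
This follows by observing that a composition of $j+n$ maps with exactly $l$ copies of $\sigma$ decomposes uniquely according to the number $i$ of $\sigma$'s among its outer $j$ factors, the outer block then contributing a summand of $\pi_i^j$ and the inner block a summand of $\pi_{l-i}^n$.

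Combining the two finishes the argument: applying the Leibniz rule to each summand $\pi_i^m(b\cdot\pi_{l-i}^n(c))$ produces the double sum $\sum_i\sum_j \pi_j^m(b)\cdot\pi_i^j(\pi_{l-i}^n(c))$; interchanging the order of summation and factoring $\pi_j^m(b)$ out of the $i$-sum by left-distributivity leaves $\sum_j \pi_j^m(b)\cdot\big(\sum_i \pi_i^j(\pi_{l-i}^n(c))\big)$; and the composition rule collapses the inner sum to $\pi_l^{j+n}(c)$, recovering the right-hand side after renaming $j$ as $i$.

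I expect the Leibniz rule to be the main obstacle. The delicate point is that $\sigma$ and $\delta$ are not assumed to commute, so the naive inductive step (conditioning on the outermost factor) appears to require a spurious commutation $\pi_i^j\circ\sigma=\sigma\circ\pi_i^j$; choosing instead the innermost-factor recursion and carefully tracking the three resulting families of terms is what makes the induction close. One must also check throughout that every map in sight ($\sigma$, $\delta$, and hence each $\pi_i^m$) is additive, so that it distributes over the finite sums, and that only the valid one-sided distributive laws of the non-associative ring $R$ are invoked.
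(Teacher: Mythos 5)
Your proof is correct, and its overall skeleton coincides with the paper's: both reduce hom-associativity of $R[X;\sigma,\delta]$ to the criterion \eqref{eq:homogeneous-iff-condition} via the corollary preceding the proposition, and both then observe that the stronger, $\alpha$-free identity \eqref{eq:assoc-id} suffices, since multiplying it on the left by $\alpha(a)$ and using distributivity recovers \eqref{eq:homogeneous-iff-condition}. The difference lies in how \eqref{eq:assoc-id} is established: the paper simply cites Nystedt's combinatorial proof \cite{NYSTEDT20132748}, noting that neither associativity nor unitality of $R$ enters that argument, whereas you prove it from scratch by induction, via the twisted higher-order Leibniz rule $\pi_i^m(b\cdot d)=\sum_{j}\pi_j^m(b)\cdot\pi_i^j(d)$ together with the Vandermonde-type splitting $\sum_{i}\pi_i^j\circ\pi_{l-i}^n=\pi_l^{j+n}$. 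Your route is self-contained and makes explicit exactly which hypotheses are consumed where ($\sigma$ an endomorphism and $\delta$ a $\sigma$-derivation in the Leibniz rule, pure word-combinatorics in the splitting, only additivity and one-sided distributivity in the assembly), at the cost of an induction that the citation sidesteps; the paper's version is shorter but asks the reader to inspect the cited proof to confirm that associativity and a unit are never used. I verified your inductive step with the innermost-factor recursion $\pi_i^{m+1}=\pi_{i-1}^m\circ\sigma+\pi_i^m\circ\delta$: expanding $\sigma(b\cdot d)$ and $\delta(b\cdot d)$, applying the hypothesis to the three resulting products, and regrouping via $\pi_i^{j+1}(d)=\pi_{i-1}^j(\sigma(d))+\pi_i^j(\delta(d))$ closes the induction exactly as you say.

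One small correction to your closing remark, which does not affect the validity of the proof you actually gave: the outermost-factor recursion $\pi_i^{m+1}=\sigma\circ\pi_{i-1}^m+\delta\circ\pi_i^m$ does not require the spurious commutation $\pi_i^j\circ\sigma=\sigma\circ\pi_i^j$. Applying $\sigma$ multiplicatively and $\delta$ via the $\sigma$-twisted Leibniz rule to the inductive hypothesis and regrouping with left-distributivity yields
\begin{equation*}
\sum_{j}\sigma\left(\pi_j^m(b)\right)\cdot\left(\sigma\left(\pi_{i-1}^j(d)\right)+\delta\left(\pi_i^j(d)\right)\right)+\sum_{j}\delta\left(\pi_j^m(b)\right)\cdot\pi_i^j(d),
\end{equation*}
and the inner bracket is precisely $\pi_i^{j+1}(d)$ by the same outermost recursion, so the terms reassemble into $\sum_k\pi_k^{m+1}(b)\cdot\pi_i^k(d)$. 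Both recursions therefore work; your caution was well placed but the obstacle you anticipated is not actually there.
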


\begin{proof}We refer the reader to the proof in \cite{NYSTEDT20132748}, where it is seen that neither associativity, nor unitality is used to prove that for all $b,c\in R$ and $l,m,n\in\mathbb{N}$,
\begin{equation}
\sum_{i\in\mathbb{N}}\pi_i^m\left(b\cdot\pi_{l-i}^n(c)\right) = \sum_{i\in\mathbb{N}}\pi_i^m(b)\cdot\pi_l^{i+n}\left(c\right), \label{eq:assoc-id}
\end{equation}
and therefore also \eqref{eq:homogeneous-iff-condition} holds.
\end{proof}
One may further ask oneself whether it is possible to construct non-trivial hom-associative Ore extensions, starting from associative rings? The answer is affirmative, and the remaining part of this section will be devoted to show that.

\begin{proposition}\label{prop:hom*ore} Let $R[X;\sigma,\delta]$ be a non-unital, associative Ore extension of a non-unital, associative ring $R$, and $\alpha\colon R\to R$ a ring endomorphism that commutes with $\delta$ and $\sigma$. Then $\left(R[X;\sigma,\delta],*,\alpha\right)$ is a multiplicative, non-unital, hom-associative Ore extension with $\alpha$ extended homogeneously to $R[X;\sigma,\delta]$.
\end{proposition}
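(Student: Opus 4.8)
The plan is to reduce everything to the $*$-construction of \autoref{prop:star-alpha-mult}, verifying the three assertions in the statement—that the object is a genuine non-associative Ore extension, that it is hom-associative, and that its twisting map is multiplicative—in turn. The single fact that makes all three work is that the homogeneous extension of $\alpha$ is a ring endomorphism of the associative ring $R[X;\sigma,\delta]$, so I would establish that first.

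To that end, observe that since $\alpha$ commutes with both $\sigma$ and $\delta$ by hypothesis, and each $\pi_i^m$ is by definition a finite sum of compositions of copies of $\sigma$ and $\delta$, the map $\alpha$ commutes with every $\pi_i^m$; that is, $\pi_i^m(\alpha(b))=\alpha(\pi_i^m(b))$ for all $i,m\in\mathbb{N}$ and $b\in R$. Condition \eqref{eq:homogeneous-endomorphism} of \autoref{lem:endo-extension} is then satisfied trivially, so the homogeneous extension of $\alpha$ is an endomorphism of $R[X;\sigma,\delta]$. Since this ring is associative, \autoref{prop:star-alpha-mult} (applied over $\mathbb{Z}$, cf. \autoref{def:hom-ring}) immediately yields that $(R[X;\sigma,\delta],*,\alpha)$ is a hom-associative ring, where $p*q=\alpha(p\cdot q)$. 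Multiplicativity is then a one-line check: using that $\alpha$ is multiplicative with respect to $\cdot$, one has $\alpha(p*q)=\alpha(\alpha(p\cdot q))=\alpha(\alpha(p)\cdot\alpha(q))=\alpha(p)*\alpha(q)$ for all polynomials $p,q$.

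The remaining and slightly more delicate point is to exhibit this structure as a non-associative Ore extension. I would compute the $*$-product of two monomials directly, using homogeneity of $\alpha$ and the commutativity $\alpha\circ\pi_i^m=\pi_i^m\circ\alpha$ established above:
\begin{equation*}
aX^m * bX^n=\alpha\left(aX^m\cdot bX^n\right)=\sum_{i\in\mathbb{N}}\alpha\!\left(a\cdot\pi_i^m(b)\right)X^{i+n}=\sum_{i\in\mathbb{N}}\left(a*\pi_i^m(b)\right)X^{i+n},
\end{equation*}
where $a*b:=\alpha(a\cdot b)$ is the induced hom-associative product on $R$ coming from \autoref{prop:star-alpha-mult}. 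This is precisely the Ore multiplication \eqref{eq:ore-mult} for the coefficient ring $(R,*)$ with the same twisting data $\sigma,\delta$. Note that $\sigma$ and $\delta$ remain left $(R,*)$-additive simply because they are additive, so the formula defines a legitimate non-unital, non-associative Ore extension $(R,*)[X;\sigma,\delta]$.

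The part that requires the most care is this last identification: one must recognize that the coefficient ring silently changes from $R$ to the hom-associative ring $(R,*)$, while $\sigma$, $\delta$, and hence each $\pi_i^m$, are left unchanged. Once the bookkeeping confirms that the $*$-multiplication coincides with the Ore multiplication of $(R,*)[X;\sigma,\delta]$, the three properties combine to give exactly the claim: $(R[X;\sigma,\delta],*,\alpha)$ is a multiplicative, non-unital, hom-associative Ore extension.
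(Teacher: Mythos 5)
Your proposal is correct and follows essentially the same route as the paper's proof: verify condition \eqref{eq:homogeneous-endomorphism} so that \autoref{lem:endo-extension} makes the homogeneously extended $\alpha$ an endomorphism, invoke \autoref{prop:star-alpha-mult} for hom-associativity, and then compute $aX^m * bX^n=\sum_{i\in\mathbb{N}}\left(a*\pi_i^m(b)\right)X^{i+n}$ to recognize the Ore multiplication \eqref{eq:ore-mult}. Your explicit check of multiplicativity and your careful remark that the coefficient ring becomes $(R,*)$ while $\sigma$, $\delta$, and the $\pi_i^m$ are unchanged are slightly more detailed than the paper's write-up, but substantively identical.
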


\begin{proof}
Since $\alpha$ is an endomorphism on $R$ that commutes with $\delta$ and $\sigma$, \eqref{eq:homogeneous-endomorphism} holds, so by \autoref{lem:endo-extension}, the homogeneously extended map $\alpha$ on $R[X;\sigma,\delta]$ is an endomorphism. Referring to \autoref{prop:star-alpha-mult}, $\left(R[X;\sigma,\delta],*,\alpha\right)$ is thus a hom-associative ring. Furthermore, we see that $*$ is the multiplication \eqref{eq:ore-mult} of a non-unital, non-associative Ore extension, since for all $a,b\in R$ and $m,n\in\mathbb{N}$,
\begin{align*}
aX^m*bX^n&=\alpha\left(\sum_{i\in\mathbb{N}}\left(a\cdot\pi_i^m(b)\right)X^{i+n}\right)=\sum_{i\in\mathbb{N}}\alpha\left(a\cdot\pi_i^m(b)\right)X^{i+n}\\
&=\sum_{i\in\mathbb{N}}\left(a*\pi_i^m(b)\right)X^{i+n}.
\end{align*}
\end{proof}
\begin{remark}\label{re:weak-unit}Note in particular that if $R[X;\sigma,\delta]$ is unital, then $(R[X;\sigma,\delta],*,\alpha)$ is weakly unital with weak unit 1 due to \autoref{cor:weak-unit}. \end{remark}
\begin{proposition}[Hom-associative $\sigma$-derivation]\label{{prop:hom-associative-sigma}} Let $A$ be an associative algebra, $\alpha$ and $\sigma$ algebra endomorphisms, and $\delta$ a $\sigma$-derivation on $A$. Assume $\alpha$ commutes with $\delta$ and $\sigma$. Then $\sigma$ is an algebra endomorphism and $\delta$ a $\sigma$-derivation on $(A,*,\alpha)$.
\end{proposition}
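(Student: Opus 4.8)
The plan is to verify directly the two defining identities for the multiplication $*$ given by $a*b=\alpha(a\cdot b)$, using only that $\alpha$ is additive and multiplicative on $(A,\cdot)$ (being an algebra endomorphism), that $\alpha$ commutes with both $\sigma$ and $\delta$ by hypothesis, and that $\sigma$ and $\delta$ already satisfy the endomorphism and $\sigma$-derivation identities with respect to $\cdot$.

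First I would show that $\sigma$ is an algebra endomorphism of $(A,*,\alpha)$. This amounts to checking multiplicativity with respect to $*$ (and, reading ``endomorphism'' in the sense of \autoref{def:morphism}, commutation with the twisting map $\alpha$, which is one of the hypotheses). For multiplicativity I would compute
\begin{equation*}
\sigma(a*b)=\sigma(\alpha(a\cdot b))=\alpha(\sigma(a\cdot b))=\alpha(\sigma(a)\cdot\sigma(b))=\sigma(a)*\sigma(b),
\end{equation*}
using in turn the definition of $*$, the commutation $\sigma\circ\alpha=\alpha\circ\sigma$, the fact that $\sigma$ is an endomorphism of $(A,\cdot)$, and once more the definition of $*$.

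Next I would show that $\delta$ is a $\sigma$-derivation of $(A,*,\alpha)$, i.e. that $\delta(a*b)=\sigma(a)*\delta(b)+\delta(a)*b$ for all $a,b\in A$. Starting from the left-hand side,
\begin{equation*}
\delta(a*b)=\delta(\alpha(a\cdot b))=\alpha(\delta(a\cdot b))=\alpha\bigl(\sigma(a)\cdot\delta(b)+\delta(a)\cdot b\bigr),
\end{equation*}
using the definition of $*$, the commutation $\delta\circ\alpha=\alpha\circ\delta$, and the $\sigma$-derivation property of $\delta$ on $(A,\cdot)$. Since $\alpha$ is additive, this splits as $\alpha(\sigma(a)\cdot\delta(b))+\alpha(\delta(a)\cdot b)=\sigma(a)*\delta(b)+\delta(a)*b$, which is exactly the desired identity.

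These computations are routine, and I do not anticipate any genuine obstacle: the whole argument reduces to pushing $\alpha$ through $\sigma$ and $\delta$ via the commutation hypotheses and then repackaging products $\alpha(x\cdot y)$ as $x*y$. The only points requiring care are the correct interpretation of ``endomorphism on $(A,*,\alpha)$'' (the commutation with $\alpha$ is supplied directly by hypothesis) and the invocation of additivity of $\alpha$ to distribute over the two summands of the $\sigma$-derivation identity; both are immediate from the standing assumptions.
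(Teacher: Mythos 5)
Your proposal is correct and follows essentially the same route as the paper's own proof: both verify the two identities by the identical chains of equalities, pushing $\alpha$ through $\sigma$ and $\delta$ via the commutation hypotheses, using the endomorphism and $\sigma$-derivation properties with respect to $\cdot$, and repackaging $\alpha(x\cdot y)$ as $x*y$ (with additivity of $\alpha$ splitting the derivation identity into its two summands). The paper's proof merely adds the remark that linearity follows immediately, which your argument covers in the same spirit.
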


\begin{proof}
Linearity follows immediately, while for any $a,b\in A$,
\begin{align*}
\sigma(a* b)&=\sigma(\alpha(a\cdot b))=\alpha(\sigma(a\cdot b))=\alpha(\sigma(a)\cdot \sigma(b))=\sigma(a)* \sigma(b),\\
\delta(a* b)&=\delta(\alpha(a\cdot b))=\alpha(\delta(a\cdot b))=\alpha(\sigma(a)\cdot \delta(b)+\delta(a)\cdot b)\\
&=\alpha(\sigma(a)\cdot \delta(b))+\alpha(\delta(a)\cdot b)=\sigma(a)*\delta(b)+\delta(a)* b,
\end{align*}
which completes the proof.
\end{proof}

\begin{remark}\label{re:skew-ore} For a non-unital, associative skew polynomial ring $R[X;\sigma,0]$, one can always achieve a deformation into a non-unital, hom-associative skew polynomial ring using \autoref{prop:hom*ore} by defining the twisting map $\alpha$ as $\sigma$, due to the fact that $\sigma$ always commutes with itself and the zero map.
\end{remark}

\begin{example}[Hom-associative quantum planes]\label{ex:hom-quant} The quantum plane can be defined as the unital, associative skew polynomial ring $K[Y][X;\sigma,0]=:A$ where $K$ is a field of characteristic zero and $\sigma$ the unital $K$-algebra automorphism of $K[Y]$ such that $\sigma(Y)=qY$ and $q\in K^\times$, $K^\times$ being the multiplicative group of nonzero elements in $K$. From \autoref{re:skew-ore}, we know that at least one nontrivial deformation of $K[Y][X;\sigma,0]$ into a hom-associative skew polynomial ring exist, so let us try to see if there are others as well. Putting $\alpha(Y)=a_mY^m+\ldots+a_1Y + a_0$ for some constants $a_m,\ldots,a_0\in K$ and $m\in\mathbb{N}$ and then comparing $\sigma(\alpha(Y))=a_m q^m Y^m+\ldots+a_1 qY+a_0$ and $\alpha(\sigma(Y))=\alpha(qY)=q\alpha(Y)=a_m qY^m+\ldots+a_1qY+a_0q$ gives $\alpha(Y)=a_1Y$ since $q\in K^\times$ is arbitrary. By the same kind of argument, $\alpha(\sigma(1))=\sigma(\alpha(1))$ if and only if $\alpha(1)=1$. For such $\alpha$ and any monomial $b_n Y^n$ where $b_n\in K$ and $n\in\mathbb{N}_{>0}$,
\begin{align*}
\alpha(\sigma(b_n Y^n))=&b_n\alpha(\sigma(Y^n))=b_n\alpha(\sigma^n(Y))=b_n\alpha^n(\sigma(Y))=b_n\sigma^n(\alpha(Y))\\
=&b_n\sigma(\alpha^n(Y))=b_n\sigma(\alpha(Y^n))=\sigma(\alpha(b_nY^n)).
\end{align*}
By linearity, $\alpha$ commutes with $\sigma$ on any polynomial in $K[Y][X;\sigma,0]$, and by excluding the possibility $\alpha\equiv0$, we put the twisting map to be $\alpha_k(Y)=kY$ for some element $k\in K^\times$, the index $k$ making evident that the map depends on the parameter $k$. This $K$-algebra endomorphism gives us a family of hom-associative quantum planes $(A,*,\alpha_k)$, each value of $k$ giving a weakly unital hom-associative skew polynomial ring, the member for which $k=1$ corresponding to the unital, associative quantum plane. If $k\neq1$, we get nontrivial deformations, since for instance $X*(Y*Y)=k^4q^2 Y^2X$, while $(X*Y)*Y=k^3q^2Y^2X$. Now that \autoref{{prop:hom-associative-sigma}} guarantees that $\sigma$ is a $K$-algebra endomorphism on any member of $(A,*,\alpha_k)$ as well, we call these members \emph{hom-associative quantum planes}, satisfying the commutation relation $X*Y=kqY*X$.
\end{example}

\begin{example}[Hom-associative universal enveloping algebras]\label{ex:hom-env} The two-di\-men\-sio\-nal Lie algebra $L$ with basis $\{X,Y\}$ over the field $K$ of characteristic zero is defined by the Lie bracket $[X,Y]_L=Y$. Its universal enveloping algebra, $U(L)$, can be written as the unital, associative differential polynomial ring $K[Y][X;\mathrm{id}_{K[Y]},\delta]$ where $\delta=Y\frac{\mathrm{d}}{\mathrm{d}Y}$. Put for the $K$-algebra endomorphism $\alpha(Y)=a_n Y^n+\ldots+ a_1 Y+ a_0$ where $a_n,\ldots,a_0\in K$ and $n\in\mathbb{N}$. Then $\alpha(\delta(Y))=\alpha(Y)=a_n Y^n+\ldots+ a_1 Y+ a_0$ and $\delta(\alpha(Y))=n a_n Y^n+\ldots+a_1 Y$, so by comparing coefficients, $a_1$ is the only nonzero such. Using the same kind of argument, $\alpha(\delta(1))=\delta(\alpha(1))$ if and only if $\alpha(1)=1$. Let $b_n\in K$ be arbitrary and $m\in\mathbb{N}_{>0}$. Then $\alpha(\delta(b_n Y^n))=nb_n \alpha(Y^n)=nb_n\alpha^n(Y)=nb_na_1^n Y^n$, and $\delta(\alpha(b_nY^n))=\delta(b_n\alpha^n(Y))=\delta(b_n a_1^nY^n)=nb_n a_1^n Y^n$. Since it is sufficient to check commutativity of $\alpha$ and $\delta$ on an arbitrary monomial, we define the twisting map as $\alpha_k(Y)=kY, k\in K^\times$, giving a family of \emph{hom-associative universal enveloping algebras of $L$}, $(U(L), *, \alpha_k)$, where the commutation relation $X\cdot Y-Y\cdot X=Y$ is deformed to $X*Y-Y*X=kY$.
\end{example}

\begin{example}[Hom-associative Weyl algebras]\label{ex:hom-weyl}
Consider the first Weyl algebra exhibited as a unital, associative differential polynomial ring, $K[Y][X;\mathrm{id}_{K[Y]},\delta]=:A$, where $K$ is a field of characteristic zero and $\delta=\frac{\mathrm{d}}{\mathrm{d}Y}$. Clearly any algebra endomorphism $\alpha$ on $K[Y]$ commutes with $\mathrm{id}_{K[Y]}$, but what about $\delta$? Since $\alpha(\delta(Y))=\alpha(1)=1$, we need to have $\delta(\alpha(Y))=1$ which implies $\alpha(Y)=Y+k$, for some $k\in K$. On the other hand, if $\alpha$ is an algebra endomorphism such that $\alpha(Y)=Y+k$ for any $k\in K$, then for any monomial $aY^m$ where $m\in\mathbb{N}_{>0}$,
\begin{align*}
\alpha(\delta(aY^m))&=am\alpha(Y^{m-1})=am\alpha^{m-1}(Y)=a m(Y+k)^{m-1},\\
\delta(\alpha(aY^m))&=a\delta(\alpha^m(Y))=a\delta((Y+k)^m)=a m(Y+k)^{m-1}.
\end{align*}
Hence any algebra endomorphism $\alpha$ on $K[Y]$ that satisfies $\alpha(Y)=Y+k$ for any $k\in K$ will commute with $\delta$ (and any algebra endomorphism that commutes with $\delta$ will be on this form). Since $\alpha$ commutes with $\delta$ and $\sigma$, we know from \autoref{cor:unital-endo-extension} that $\alpha$ extends to a ring endomorphism on $A$ as well by $\alpha(aX^m)=\alpha(a)X^m$. Linearity over $K$ follows from the definition, so in fact $\alpha$ extends to an algebra endomorphism on $A$. Appealing to \autoref{prop:hom*ore} and \autoref{re:weak-unit}, we thus have a family of hom-associative, weakly-unital differential polynomial rings $(A,*,\alpha_k)$ with weak unit 1, where $k\in K$ and $\alpha_k$ is the $K$-algebra endomorphism defined by $\alpha_k\left(p(Y)X^m\right)=p(Y+k)X^m$ for all polynomials $p(Y)\in K[Y]$ and $m\in\mathbb{N}$. Since \autoref{{prop:hom-associative-sigma}} assures $\delta$ to be a $K$-linear $\sigma$-derivation on any member $(A,*,\alpha_k)$ as well, we call these \emph{hom-associative Weyl algebras}, including the associative Weyl algebra in the member corresponding to $k=0$. One can note that the hom-associative Weyl algebras all satisfy the commutation relation $X*Y-Y*X=1$, where 1 is a weak unit.
\end{example}

\begin{lemma}\label{lem:diff-mult}
Let $R$ be a non-unital, non-associative ring. Then in $R[X;\mathrm{id}_R,\delta]$,
\begin{equation}
aX^n\cdot b=\sum_{i=0}^n \left(\binom{n}{i}\cdot a\cdot\delta^{n-i}(b)\right)X^i, \quad\text{ for any } a,b\in R \text{ and } n\in\mathbb{N}.
\end{equation}
\end{lemma}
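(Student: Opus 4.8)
The plan is to specialize the general Ore multiplication \eqref{eq:ore-mult} to the differential case $\sigma=\mathrm{id}_R$, reading the second factor $b\in R$ as the element $bX^0$ under the embedding $R\cong RX^0$. First I would apply \eqref{eq:ore-mult} with $m=n$ and with right exponent $0$, which gives $aX^n\cdot b=aX^n\cdot bX^0=\sum_{i\in\mathbb{N}}\left(a\cdot\pi_i^n(b)\right)X^{i}$. Since $\pi_i^n\equiv0$ whenever $i<0$ or $i>n$, the infinite sum truncates to $\sum_{i=0}^{n}\left(a\cdot\pi_i^n(b)\right)X^{i}$, which already has the desired shape; it remains only to identify the coefficient $a\cdot\pi_i^n(b)$ with $\binom{n}{i}\cdot a\cdot\delta^{n-i}(b)$.

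The key step is evaluating $\pi_i^n$ under the hypothesis $\sigma=\mathrm{id}_R$. By definition $\pi_i^n$ is the sum of all $\binom{n}{i}$ composites built from $i$ copies of $\sigma$ and $n-i$ copies of $\delta$ arranged in every possible order. When $\sigma=\mathrm{id}_R$, inserting a copy of $\sigma$ anywhere in a composite leaves that composite unchanged, so every one of these $\binom{n}{i}$ composites collapses to the single map $\delta^{n-i}$. Summing $\binom{n}{i}$ identical copies of $\delta^{n-i}$ therefore yields $\pi_i^n(b)=\binom{n}{i}\delta^{n-i}(b)$, where the integer $\binom{n}{i}$ is understood as repeated addition in $R$.

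Finally I would extract this integer from the product. Because the multiplication of $R[X;\sigma,\delta]$ is additive in each argument, $a\cdot\bigl(\binom{n}{i}\delta^{n-i}(b)\bigr)=\binom{n}{i}\cdot\bigl(a\cdot\delta^{n-i}(b)\bigr)$, and substituting this back into the truncated sum produces exactly $\sum_{i=0}^{n}\bigl(\binom{n}{i}\cdot a\cdot\delta^{n-i}(b)\bigr)X^{i}$, as claimed. The computation is routine, and the only points deserving care are those stemming from the non-unital, non-associative setting: the symbol $\binom{n}{i}\cdot x$ must be read as an iterated sum rather than as a scalar action, and one must appeal solely to distributivity — not to associativity or to the existence of a unit — when pulling the coefficient out. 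I would deliberately avoid an inductive argument of the form $aX^n=aX^{n-1}\cdot X$, since $X$ need not be a legitimate element of the extension when $R$ lacks a unit; the direct specialization above sidesteps this issue entirely.
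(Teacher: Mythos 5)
Your proposal is correct and is essentially the paper's own argument: the paper's proof reads, in its entirety, ``This follows from \eqref{eq:ore-mult} with $\sigma=\mathrm{id}_R$,'' and you have simply filled in the implicit details — the collapse of $\pi_i^n$ to $\binom{n}{i}\,\delta^{n-i}$ when $\sigma=\mathrm{id}_R$, the truncation of the sum, and the extraction of the integer coefficient by additivity. Your closing caution about not inducting via $aX^n=aX^{n-1}\cdot X$ in the non-unital setting is well taken but does not change the fact that this is the same route.
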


\begin{proof}This follows from \eqref{eq:ore-mult} with $\sigma=\mathrm{id}_R$.
\end{proof}

\begin{lemma}\label{le:weak-unit-hom-conditions} Let $R$ be a weakly unital, hom-associative ring with weak unit $e$ and twisting map $\alpha$ commuting with the derivation $\delta$ on $R$, and extend $\alpha$ homogeneously to  $R[X;\mathrm{id}_R,\delta]$. Then the following hold:
\begin{enumerate}[(i)]
	\item $a\cdot\delta^n(e)=\delta^n(e)\cdot a=0$ for any $a\in R$ and $n\in\mathbb{N}_{>0}$,
	\item $e$ is a weak unit in $R[X;\mathrm{id}_R,\delta]$,
	\item $eX\cdot q -q\cdot eX =\sum_{i=0}^n \alpha(\delta(q_i))X^i$ for any $q=\sum_{i=0}^n q_i X^i\in R[X;\mathrm{id}_R,\delta]$.
\end{enumerate}
\end{lemma}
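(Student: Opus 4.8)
The plan is to prove the three items in order, using the weak-unitality and hom-associativity of $R$ together with the fact that $\alpha$ commutes with $\delta$. For item (i), I would start from the defining property of a $\sigma$-derivation with $\sigma=\mathrm{id}_R$, namely $\delta(a\cdot b)=a\cdot\delta(b)+\delta(a)\cdot b$, and apply it to $e\cdot a=\alpha(a)$. Differentiating both sides and using that $\alpha$ commutes with $\delta$ gives $\delta(e)\cdot a+e\cdot\delta(a)=\alpha(\delta(a))=e\cdot\delta(a)$, so $\delta(e)\cdot a=0$; symmetrically $a\cdot\delta(e)=0$. The claim for general $n\in\mathbb{N}_{>0}$ should follow by induction on $n$: assuming $a\cdot\delta^{n}(e)=\delta^{n}(e)\cdot a=0$ for all $a$, I would differentiate the relation $\delta^{n}(e)\cdot a=0$ to obtain $\delta^{n+1}(e)\cdot a+\delta^{n}(e)\cdot\delta(a)=0$, and the second term vanishes by the induction hypothesis applied to $\delta(a)$, leaving $\delta^{n+1}(e)\cdot a=0$.

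For item (ii), I must show $e\cdot q=q\cdot e=\alpha(q)$ for every $q=\sum_{i=0}^{n}q_iX^i$ in $R[X;\mathrm{id}_R,\delta]$, where $\alpha$ is the homogeneous extension. By additivity it suffices to check this on a monomial $q_iX^i$. Using the Ore multiplication \eqref{eq:ore-mult} with $\sigma=\mathrm{id}_R$, the product $e\cdot q_iX^i$ (viewing $e=eX^0$) is simply $(e\cdot q_i)X^i=\alpha(q_i)X^i$, which matches $\alpha(q_iX^i)$ by the homogeneous extension. The product $q_iX^i\cdot e$ is the more interesting direction: by \autoref{lem:diff-mult} it expands as $\sum_{j=0}^{i}\binom{i}{j}(q_i\cdot\delta^{i-j}(e))X^j$, and here item (i) kills every term with $i-j>0$, leaving only the $j=i$ term $(q_i\cdot e)X^i=\alpha(q_i)X^i$. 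Thus both one-sided products equal $\alpha(q_iX^i)$, and $e$ is a weak unit.

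For item (iii), I would compute $eX\cdot q$ and $q\cdot eX$ separately on each monomial $q_iX^i$ and subtract. Using \eqref{eq:ore-mult}, $eX\cdot q_iX^i=(e\cdot\sigma(q_i))X^{i+1}+(e\cdot\delta(q_i))X^{i}$ with $\sigma=\mathrm{id}_R$, i.e. $\alpha(q_i)X^{i+1}+\alpha(\delta(q_i))X^{i}$. For the other product, \autoref{lem:diff-mult} applied to $q_iX^i\cdot eX$ — treating $eX$ as $e$ times $X$ — expands into terms involving $q_i\cdot\delta^{k}(e)$, and again item (i) annihilates all derivatives of $e$ of positive order, so only the top term $\alpha(q_i)X^{i+1}$ survives. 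Subtracting, the $X^{i+1}$ terms cancel and the surviving difference is exactly $\alpha(\delta(q_i))X^{i}$; summing over $i$ gives the stated formula.

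The main obstacle I anticipate is item (iii), specifically handling the product $q\cdot eX$ cleanly: because $eX$ is a monomial of degree one rather than the degree-zero element $e$, one cannot apply \autoref{lem:diff-mult} verbatim, and care is needed to track which $\delta^{k}(e)$ terms appear and to confirm that item (i) eliminates all of them except the one multiplying the zeroth derivative. Everything else is routine once item (i) is in hand, so establishing (i) rigorously — including the inductive step where commutativity of $\alpha$ with $\delta$ and weak-unitality interact — is the real crux.
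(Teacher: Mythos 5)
Your plan is correct and follows essentially the same route as the paper: item (i) via the Leibniz rule, weak-unitality, commutativity of $\alpha$ with $\delta$, and the same induction (the paper phrases the base case slightly differently, first deriving $\delta(e)\cdot a=a\cdot\delta(e)$ from two expansions of $\delta(a\cdot e)$, but your direct one-sided computations are equivalent); items (ii) and (iii) via \autoref{lem:diff-mult} and the multiplication \eqref{eq:ore-mult}, with (i) annihilating all $\delta^{k}(e)$ terms of positive order. Your anticipated obstacle in (iii) is harmless, since \eqref{eq:ore-mult} applies directly to $q_iX^i\cdot eX^1$ without treating $X$ as an element, giving the diff-mult expansion with every exponent shifted up by one.
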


\begin{proof} First, note that
\begin{align*}
\delta(a\cdot e)=&a\cdot\delta(e)+\delta(a)\cdot e=a\cdot\delta(e)+e\cdot\delta(a),\\
\delta(a\cdot e)=&\delta(e\cdot a)=e\cdot\delta(a)+\delta(e)\cdot a,
\end{align*}
and hence $\delta(e)\cdot a=a\cdot\delta(e)$. Moreover, $\delta(a\cdot e)=\delta(\alpha(a))=\alpha(\delta(a))=e\cdot\delta(a)$, so $\delta(e)\cdot a=0$. Assume $\delta^n(e)\cdot a=a\cdot\delta^n(e)=0$ for all $n\in\mathbb{N}_{>0}$. Then, since $a$ is arbitrary, $\delta^n(e)\cdot\delta(a)=\delta(a)\cdot\delta^n(e)=0$ as well, and hence
\begin{align*}
0=&\delta(0)=\delta\left(a\cdot\delta^n(e)\right)=a\cdot\delta^{n+1}(e)+\delta(a)\cdot\delta^n(e)=a\cdot\delta^{n+1}(e),\\
0=&\delta(0)=\delta\left(\delta^n(e)\cdot a\right)=\delta^n(e)\cdot\delta(a)+\delta^{n+1}(e)\cdot a= \delta^{n+1}(e)\cdot a,
\end{align*}
so the first assertion holds by induction. The second assertion follows from the first and \autoref{lem:diff-mult} with $b=e$, since for any $m\in\mathbb{N}$,
\begin{equation*}
aX^m\cdot e=(a\cdot e)X^m=\alpha(a)X^m=\alpha\left(aX^m\right)=(e\cdot a)X^m=e\cdot \left(a X^m\right),
\end{equation*}
and by distributivity of the multiplication, $e\cdot q=q\cdot e=\alpha(q)$ for any $q\in R[X;\mathrm{id}_R;\delta]$. The last assertion follows from a direct computation using the first assertion and \autoref{lem:diff-mult}.\qedhere
\end{proof}

A well-known fact about the associative Weyl algebras are that they are simple. This fact is also true in the case of the non-associative Weyl algebras introduced in \cite{2015arXiv150901436N}, and it turns out that the hom-associative Weyl algebras have this property as well.

\begin{proposition}The hom-associative Weyl algebras are simple.
\end{proposition}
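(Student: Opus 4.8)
The plan is to mimic the classical proof that the Weyl algebra is simple: show that repeated twisted commutators with $X$ and $Y$ drive any nonzero element of a two-sided ideal down to a nonzero scalar, and then exploit the weak unit together with the surjectivity of $\alpha_k$. Recall that the hom-associative Weyl algebra is $(A,*,\alpha_k)$ with $A=K[Y][X;\mathrm{id}_{K[Y]},\delta]$, $\delta=\tfrac{\mathrm{d}}{\mathrm{d}Y}$, and weak unit $1$. Write a generic element as $q=\sum_{i=0}^n q_iX^i$ with $q_i\in K[Y]$, and let $I$ be a nonzero two-sided ideal of $(A,*,\alpha_k)$; since $I$ absorbs $*$-products on both sides, $X*q-q*X$ and $Y*q-q*Y$ lie in $I$ whenever $q$ does.

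The two reduction operators are as follows. By \autoref{le:weak-unit-hom-conditions}(iii), applied with weak unit $e=1$,
\[
X*q-q*X=\sum_{i=0}^n\alpha_k\bigl(\delta(q_i)\bigr)X^i,
\]
so commuting with $X$ differentiates each coefficient. A direct computation from \eqref{eq:ore-mult} (or \autoref{lem:diff-mult}), using that $K[Y]$ is commutative, $\sigma=\mathrm{id}$, and $\delta(Y)=1$, gives the companion identity
\[
Y*q-q*Y=-\sum_{i=1}^n i\,\alpha_k(q_i)\,X^{i-1};
\]
as a sanity check, both formulas reproduce $X*Y-Y*X=1$. Since $\alpha_k$ is degree-preserving and injective and $\operatorname{char}K=0$, commuting with $Y$ lowers the $X$-degree by exactly one while preserving a nonzero leading coefficient, and commuting with $X$ lowers the $Y$-degree of an element of $K[Y]$ by exactly one while keeping it nonzero.

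Hence the reduction proceeds in two stages. Take any nonzero $q\in I$ of $X$-degree $n$ and apply the operation $r\mapsto Y*r-r*Y$ repeatedly, $n$ times in total; the leading coefficient survives each step because the scalar factors $n,n-1,\dots,1$ are invertible in characteristic zero and $\alpha_k$ is injective, so we obtain a nonzero element $p\in I\cap K[Y]$ of $X$-degree $0$. Next, applying $r\mapsto X*r-r*X$ exactly $\deg_Y p$ times sends $p$ to a nonzero constant $c\in K$ lying in $I$. Scaling by $c^{-1}\in K$ (legitimate since $I$ is a $K$-subspace) shows that the constant polynomial $1$ belongs to $I$.

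Finally, $1$ is only a weak unit, so we conclude using $\alpha_k$ rather than $1$ directly: for every $a\in A$ we have $1*a=\alpha_k(1\cdot a)=\alpha_k(a)\in I$, whence $\alpha_k(A)\subseteq I$. As $\alpha_k$ is an automorphism of $A$ (with inverse $\alpha_{-k}$), it is surjective, so $A=\alpha_k(A)\subseteq I$ and therefore $I=A$. I expect the main obstacle to be, on the technical side, the bookkeeping in the identity for $Y*q-q*Y$, and, conceptually, this last step: because the algebra is only weakly unital, containing $1$ does not immediately yield the whole algebra, and one must invoke surjectivity of $\alpha_k$ — which is precisely where the characteristic-zero hypothesis (ensuring the degree reductions terminate at a nonzero scalar) and the automorphism property of $\alpha_k$ enter decisively.
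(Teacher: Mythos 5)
Your proof is correct and takes essentially the same route as the paper's: use \autoref{le:weak-unit-hom-conditions}(iii) and the explicit $Y$-commutator identity to drive a nonzero element of $I$ down to a nonzero scalar in characteristic zero, obtain $1\in I$, and conclude from $1*a=\alpha_k(a)$ together with surjectivity of $\alpha_k$ (inverse $\alpha_{-k}$). The only cosmetic differences are that you reduce the $X$-degree first and the $Y$-degree second (the paper does the reverse order) and that you scale by $c^{-1}$ using that $I$ is a $K$-subspace, where the paper instead computes $a_n^{-1}*(a_n*1)=\alpha_k(1)=1$ inside the algebra.
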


\begin{proof}
The main part of the proof follows the same line of reasoning that can be applied to the unital and associative case; let $(A,*,\alpha_k)$ be any hom-associative Weyl algebra, and $I$ any nonzero ideal of it. Let $p=\sum_{i\in\mathbb{N}}p_i(Y)X^i\in I$ be an arbitrary nonzero polynomial with $p_i(Y)\in K[Y]$, and put $m:=\max_i(\deg(p_i(Y)))$. Then, since $1\in A$ is a weak unit in $(A,*,\alpha_k)$, we may use \autoref{le:weak-unit-hom-conditions} and the commutator $[\cdot,\cdot]$ to compute
\begin{equation*}
[X,p]=\sum_{i\in\mathbb{N}}\alpha_k\left(p'_i(Y)X^i\right)=\sum_{i\in\mathbb{N}}p'_i(Y+k)X^i.
\end{equation*}
Since $\max_i(\deg(p'_i(Y+k))=m-1$, by applying the commutator to the resulting polynomial with $X$ $m$ times, we get a polynomial $\sum_{j\in\mathbb{N}}a_jX^j$ of degree $n$, where $a_n\in K$ is nonzero. Then
\begin{align*}
\sum_{j\in\mathbb{N}}a_jX^j*Y&=\sum_{j\in\mathbb{N}}\sum_{i\in\mathbb{N}}a_j*\pi_i^j(Y)X^i=\sum_{j\in\mathbb{N}}\left(a_j*X^{j-1}+a_j*YX^j\right),\\
Y*\sum_{j\in\mathbb{N}}a_jX^j&=\alpha_k\left(Y\sum_{j\in\mathbb{N}}a_jX^j\right)=\alpha_k\left(\sum_{j\in\mathbb{N}}a_jYX^j\right)=\sum_{j\in\mathbb{N}}\alpha_k\left(a_jYX^j\right)\\
&=\sum_{j\in\mathbb{N}}a_j*YX^j.
\end{align*}
Therefore $\deg\left(\left[\sum_{j\in\mathbb{N}}a_jX^j,Y\right]\right)=n-1$, where $\deg(\cdot)$ now denotes the degree of a polynomial in $X$. By applying the commutator to the resulting polynomial with $Y$ $n$ times, we get $a_n*1\in I$;
\begin{equation*}
a_n*1=\alpha_k(a_n)=a_n\in I\implies a_n^{-1}*\left(a_n*1\right)=a_n^{-1}*a_n=\alpha_k(1)=1\in I.
\end{equation*}
Take any polynomial $q=\sum_{i\in\mathbb{N}}q_i(Y)X^i$ in $(A,*,\alpha_k)$. Then
\begin{equation*}
1*\sum_{i\in\mathbb{N}}q_i(Y-k)X^i=\sum_{i\in\mathbb{N}}q_i(Y)X^i=q\in I,\text{ and therefore } I=(A,*,\alpha_k).
\end{equation*}
\end{proof}

\section{Weak unitalizations of hom-associative algebras}\label{sec:weak-unitalization}
For a non-unital, associative $R$-algebra $A$ consisting of an $R$-module $M$ endowed with a multiplication, one can always find an embedding of the algebra into a unital, associative algebra by taking the direct sum $M\oplus R$ and defining multiplication by
\begin{equation*}
(m_1,r_1)\cdot(m_2,r_2):=(m_1\cdot m_2+r_1\cdot m_2+r_2\cdot m_1, r_1\cdot r_2),\quad m_1,m_2\in M \text{ and } r_1,r_2\in R.
\end{equation*}
$A$ can then be embedded by the injection map $M\to M\oplus 0$, being an isomorphism into the unital, associative algebra $M\oplus R$ with the unit given by $(0,1)$.

In \cite{2009arXiv0904.4874F}, Frégier and Gohr showed that not all hom-associative algebras can be embedded into even a weakly unital hom-associative algebra. In this section, we prove that any multiplicative hom-associative algebra can be embedded into a multiplicative, weakly unital hom-associative algebra by twisting the above unitalization of a non-unital, associative algebra with $\alpha$. We call this a \emph{weak unitalization}.

\begin{proposition}\label{prop:bullet-algebra} Let $M$ be a non-unital, non-associative $R$-algebra and $\alpha$ a linear map on $M$. Endow $M\oplus R$ with the following multiplication:
\begin{align}
(m_1,r_1)\bullet(m_2,r_2):=&(m_1\cdot m_2+r_1\cdot \alpha(m_2)+r_2\cdot \alpha(m_1),r_1\cdot r_2),\label{eq:bullet-mult}
\end{align}
for any $m_1,m_2\in M$ and $r_1,r_2\in R$. Then $M\oplus R$ is a non-unital, non-associative $R$-algebra.
\end{proposition}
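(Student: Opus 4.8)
The plan is to check directly that $M\oplus R$, equipped with the standard direct-sum $R$-module structure $s\cdot(m,r):=(s\cdot m,\,s\cdot r)$, becomes a non-unital, non-associative $R$-algebra under $\bullet$. Since the claim asks only for the non-unital, non-associative structure, no associativity axiom and no unit need be verified; it therefore suffices to show that $\bullet$ is well defined and $R$-bilinear, i.e.\ additive in each argument and compatible with the scalar action of $R$ in each argument.

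First I would observe that $\bullet$ really does land in $M\oplus R$. In the first coordinate, $m_1\cdot m_2$ lies in $M$ because $M$ carries a multiplication, while $r_1\cdot\alpha(m_2)$ and $r_2\cdot\alpha(m_1)$ lie in $M$ because $\alpha$ maps $M$ to $M$ and the module action of $R$ preserves $M$; hence the whole first coordinate is in $M$. The second coordinate $r_1\cdot r_2$ lies in $R$. Next I would establish additivity: expanding $\big((m_1,r_1)+(m_1',r_1')\big)\bullet(m_2,r_2)$ and invoking additivity of the multiplication on $M$, distributivity of the module action, additivity of $\alpha$, and distributivity in $R$, the expression splits as $(m_1,r_1)\bullet(m_2,r_2)+(m_1',r_1')\bullet(m_2,r_2)$; additivity in the second argument is symmetric.

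The heart of the argument, and the one step I expect to require a little care, is compatibility with scalars. Comparing $\big(s\cdot(m_1,r_1)\big)\bullet(m_2,r_2)$ with $s\cdot\big((m_1,r_1)\bullet(m_2,r_2)\big)$, the terms $(s\cdot m_1)\cdot m_2$ and $(s r_1)\cdot r_2$ behave correctly by $R$-bilinearity of the multiplication on $M$ and associativity in $R$. The delicate piece is the cross-term $r_2\cdot\alpha(s\cdot m_1)$: here I would first use $R$-linearity of $\alpha$ to rewrite it as $r_2\cdot(s\cdot\alpha(m_1))$, then pass $s$ to the outside using associativity of the module action together with the commutativity of $R$, namely $r_2\cdot(s\cdot\alpha(m_1))=(r_2 s)\cdot\alpha(m_1)=(s r_2)\cdot\alpha(m_1)=s\cdot(r_2\cdot\alpha(m_1))$. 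The companion cross-term $(s r_1)\cdot\alpha(m_2)=s\cdot(r_1\cdot\alpha(m_2))$ reduces directly by associativity of the module action, with no appeal to commutativity.

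Finally, the second-argument scalar compatibility is handled by exactly the same manipulations, with the roles of the two cross-terms interchanged. Assembling all four identities shows that $\bullet$ is $R$-bilinear, which—together with well-definedness—is precisely what is needed to conclude that $\big(M\oplus R,\bullet\big)$ is a non-unital, non-associative $R$-algebra. The only conceptual subtlety in the whole argument is the interplay in the cross-terms between the $R$-linearity of $\alpha$ and the commutativity of $R$; every other verification is routine distributivity.
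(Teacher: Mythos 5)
Your proof is correct and takes essentially the same approach as the paper's: both equip $M\oplus R$ with the standard direct-sum $R$-module structure and verify $R$-bilinearity of $\bullet$ directly, with the same key manipulation of the cross-term $r_2\cdot\alpha(\lambda\cdot m_1)=\lambda\cdot r_2\cdot\alpha(m_1)$ via linearity of $\alpha$ and commutativity of $R$, and both appeal to symmetry for the second argument. Your explicit well-definedness check is a harmless addition the paper leaves implicit.
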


\begin{proof}
$R$ can be seen as a module over itself, and since any direct sum of modules over $R$ is again a module over $R$, $M\oplus R$ is a module over $R$. For any $m_1,m_2\in M$ and $\lambda, r_1, r_2\in R$,
\begin{align*}
\lambda\cdot\left((m_1,r_1)\bullet(m_2,r_2)\right)&=\lambda\cdot\left(m_1\cdot m_2+r_1\cdot \alpha(m_2)+r_2\cdot \alpha(m_1),r_1\cdot r_2\right)\\
&=\left(\lambda\cdot m_1\cdot m_2+\lambda\cdot r_1\cdot\alpha(m_2)+\lambda\cdot r_2\cdot\alpha(m_1),\lambda\cdot r_1\cdot r_2\right)\\
&=\left(\lambda\cdot m_1\cdot m_2+\lambda\cdot r_1\cdot\alpha(m_2)+ r_2\cdot\alpha(\lambda\cdot m_1),\lambda\cdot r_1\cdot r_2\right)\\
&=(\lambda\cdot m_1,\lambda\cdot r_1)\bullet(m_2,r_2)=\left(\lambda\cdot(m_1,r_1)\right)\bullet(m_2,r_2),
\end{align*}
\begin{align*}
\left((m_1,r_1)+(m_2,r_2)\right)\bullet(m_3,r_3)=&(m_1+m_2,r_1+r_2)\bullet(m_3,r_3)\\
=&\left((m_1+m_2)\cdot m_3+(r_1+r_2)\cdot\alpha(m_3)\right.\\
&\left.+ r_3\cdot\alpha(m_1+m_2),(r_1+r_2)\cdot r_3)\right)\\
=&\left(m_1\cdot m_3+r_1\cdot\alpha(m_3)+r_3\cdot\alpha(m_1),r_1\cdot r_3\right)\\
&+\left(m_2\cdot m_3+m_2\cdot\alpha(m_3)+r_3\cdot\alpha(m_2),r_2\cdot r_3\right)\\
=&(m_1,r_1)\bullet(m_3,r_3)+(m_2,r_2)\bullet(m_3,r_3),
\end{align*}
so the binary operation $\bullet$ is linear in the first argument, and by symmetry, also linear in the second argument.
\end{proof}

\begin{proposition}[Weak unitalization] If $(M,\cdot,\alpha)$ is a multiplicative hom-associ\-ative algebra over an associative, commutative, and unital ring $R$, then $(M\oplus R,\bullet ,\beta_\alpha)$ is a multiplicative, weakly unital hom-associative algebra over $R$ with weak unit $(0,1)$. Here, $\bullet$ is given by \eqref{eq:bullet-mult} and $\beta_\alpha\colon M\oplus R\to M\oplus R$ by
\begin{align}
 \beta_\alpha((m_1,r_1)):=&(\alpha(m_1),r_1),\quad \text{for any } m_1\in M \text{ and } r_1\in R.
 \end{align}
We call $(M\oplus R,\bullet, \beta_\alpha)$ a \emph{weak unitalization} of $(M,\cdot,\alpha)$.
\end{proposition}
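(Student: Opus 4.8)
The plan is to verify the three defining properties in turn, using \autoref{prop:bullet-algebra} to supply the underlying bilinear $R$-algebra structure of $(M\oplus R,\bullet)$ for free, so that only the hom-associativity of $\bullet$, the multiplicativity of $\beta_\alpha$, and the weak-unit property of $(0,1)$ remain to be checked.

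First I would record that $\beta_\alpha$ is $R$-linear and multiplicative with respect to $\bullet$. Linearity is immediate from the componentwise definition together with the linearity of $\alpha$. For multiplicativity, I would expand both $\beta_\alpha((m_1,r_1)\bullet(m_2,r_2))$ and $\beta_\alpha((m_1,r_1))\bullet\beta_\alpha((m_2,r_2))$; using $\alpha(m_1\cdot m_2)=\alpha(m_1)\cdot\alpha(m_2)$ and the $R$-linearity of $\alpha$, both collapse to the common value $(\alpha(m_1)\cdot\alpha(m_2)+r_1\cdot\alpha^2(m_2)+r_2\cdot\alpha^2(m_1),\,r_1 r_2)$. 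Here the multiplicativity hypothesis on $(M,\cdot,\alpha)$ is exactly what lets me push $\alpha$ through the product $m_1\cdot m_2$.

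The central computation is hom-associativity. I would expand both $\beta_\alpha((m_1,r_1))\bullet\big((m_2,r_2)\bullet(m_3,r_3)\big)$ and $\big((m_1,r_1)\bullet(m_2,r_2)\big)\bullet\beta_\alpha((m_3,r_3))$ fully in the first (the $M$-valued) component, using $R$-bilinearity to pull scalars out of products and the multiplicativity of $\alpha$ to simplify $\alpha$ applied to the inner products; the second ($R$-valued) component is $r_1 r_2 r_3$ on both sides by associativity and commutativity of $R$. In the first component, every scalar-weighted term---those carrying a factor $r_i$ or $r_i r_j$, built from the products $\alpha(m_i)\cdot\alpha(m_j)$ and from $\alpha^2(m_i)$---appears identically on both sides and cancels. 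What survives is the single purely $M$-valued term $\alpha(m_1)\cdot(m_2\cdot m_3)$ on the left against $(m_1\cdot m_2)\cdot\alpha(m_3)$ on the right, and these agree precisely by the hom-associativity \eqref{eq:hom-condition} of $(M,\cdot,\alpha)$.

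Finally I would confirm the weak unit by direct substitution: $(0,1)\bullet(m,r)=(\alpha(m),r)=(m,r)\bullet(0,1)$, using $0\cdot m=m\cdot 0=0$ and $\alpha(0)=0$, and this equals $\beta_\alpha((m,r))$, which is the defining condition of \autoref{def:weak-hom}. I do not expect a genuine obstacle; the only step requiring care is the hom-associativity bookkeeping, where the whole identity reduces to that of $M$ once one checks that the many mixed scalar terms coincide---so the real content is isolating the two unmatched terms and recognising them as the two sides of \eqref{eq:hom-condition}.
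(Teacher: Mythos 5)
Your proposal is correct and takes essentially the same route as the paper's proof: invoke \autoref{prop:bullet-algebra} for the underlying algebra structure, verify linearity and multiplicativity of $\beta_\alpha$ from those of $\alpha$, expand both sides of the hom-associativity identity so that all scalar-weighted mixed terms match after rewriting $\alpha(m_i\cdot m_j)$ as $\alpha(m_i)\cdot\alpha(m_j)$ (a use of multiplicativity the paper makes only implicitly, and which you rightly flag as necessary), leaving exactly $\alpha(m_1)\cdot(m_2\cdot m_3)$ versus $(m_1\cdot m_2)\cdot\alpha(m_3)$, settled by \eqref{eq:hom-condition}, and check the weak unit $(0,1)$ by direct substitution. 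There are no gaps.
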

\begin{proof}
We proved in \autoref{prop:bullet-algebra} that the multiplication $\bullet$ made $M\oplus R$ a non-unital, non-associative algebra, and due to the fact that $\alpha$ is linear, it follows that $\beta_\alpha$ is also linear. Multiplicativity of $\beta_\alpha$ also follows from that of $\alpha$, since for any $m_1,m_2,m_3\in M$ and $r_1,r_2,r_3\in R$,
\begin{align*}
\beta_\alpha\left((m_1,r_1)\right)\bullet\beta_\alpha\left((m_2,r_2)\right)=&(\alpha(m_1),r_1)\bullet(\alpha(m_2),r_2)\\
=&\left(\alpha(m_1)\cdot\alpha(m_2)+r_1\cdot\alpha(\alpha(r_2))+r_2\cdot\alpha(\alpha(r_1)), r_1\cdot r_2\right)\\
=&\left(\alpha(m_1\cdot m_2)+r_1\cdot\alpha(\alpha(r_2))+r_2\cdot\alpha(\alpha(r_1)),r_1\cdot r_2\right)\\
=&\left(\alpha\left(m_1\cdot m_2+r_1\cdot\alpha(r_2)+r_2\cdot\alpha(r_1)\right),r_1\cdot r_2\right)\\
=&\beta_\alpha\left((m_1,r_1)\bullet(m_2,r_2)\right),
\end{align*}
while hom-associativity can be proved by the following calculation:
\begin{align*}
\beta_\alpha\left((m_1,r_1)\right)\bullet\left((m_2,r_2)\bullet(m_3,r_3)\right)=&(\alpha(m_1),r_1)\bullet(m_2\cdot m_3+r_2\cdot\alpha(m_3)\\
&+r_3\cdot\alpha(m_2),r_2\cdot r_3)\\
=&\left(\alpha(m_1)\cdot(m_2\cdot m_3)+r_2\cdot\alpha(m_1)\cdot\alpha(m_3)\right.\\
&+r_3\cdot\alpha(m_1)\cdot\alpha(m_2)\\
&+r_1\cdot\alpha(m_2\cdot m_3+ r_2\cdot\alpha(m_3)+r_3\cdot\alpha(m_2))\\
&\left.+r_2\cdot r_3\cdot\alpha(\alpha(m_1)), r_1\cdot r_2\cdot r_3\right)\\
=&\left((m_1\cdot m_2)\cdot\alpha(m_3)+r_2\cdot\alpha(m_1)\cdot\alpha(m_3)\right.\\
&+r_3\cdot\alpha(m_1)\cdot\alpha(m_2)\\
&+r_1\cdot\alpha(m_2\cdot m_3+ r_2\cdot\alpha(m_3)+r_3\cdot\alpha(m_2))\\
&\left.+r_2\cdot r_3\cdot\alpha(\alpha(m_1)), r_1\cdot r_2\cdot r_3\right)\\
=&\left((m_1\cdot m_2+r_1\cdot\alpha(m_2)+r_2\cdot\alpha(m_1))\cdot\alpha(m_3)\right.\\
&+r_1\cdot r_2\cdot\alpha(\alpha(m_3))+r_3\cdot\alpha(m_1\cdot m_2)\\
&+\left. r_3\cdot\alpha(r_1\cdot\alpha(m_2)+r_2\cdot\alpha(m_1)),r_1\cdot r_2\cdot r_3\right)\\
=&\left((m_1,r_1)\bullet (m_2,r_2)\right)\bullet \beta_\alpha((m_3,r_3)).
\end{align*}
At last, $(m_1,r_1)\bullet(0,1)=(0,1)\bullet(m_1,r_1)=(1\cdot\alpha(m_1),1\cdot r_1)=\beta_\alpha((m_1,r_1))$.
\end{proof}

\begin{remark} In case $\alpha$ is the identity map, so that the algebra is associative, the weak unitalization is the unitalization described in the beginning of this section, thus giving a unital algebra.
\end{remark}

\begin{corollary}\label{cor:iso-bullet} $(M,\cdot,\alpha)\cong(M\oplus 0,\bullet,\beta_\alpha)$.
\end{corollary}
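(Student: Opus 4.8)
The plan is to exhibit an explicit isomorphism of hom-associative algebras realizing $M$ as the subalgebra $M\oplus 0$ of its weak unitalization. The natural candidate is the map $f\colon M\to M\oplus 0$ defined by $f(m)=(m,0)$ for all $m\in M$; I would verify that it meets the three requirements of \autoref{def:morphism} together with bijectivity.

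First I would observe that $f$ is a bijective $R$-linear map, since $m\mapsto(m,0)$ is precisely the canonical inclusion of $M$ as the first summand, which is a module isomorphism onto $M\oplus 0$. Next, to see that $f$ is an algebra homomorphism, I would specialize the multiplication \eqref{eq:bullet-mult} to elements with vanishing second coordinate: for any $m_1,m_2\in M$,
\begin{equation*}
f(m_1)\bullet f(m_2)=(m_1,0)\bullet(m_2,0)=(m_1\cdot m_2+0+0,\,0)=(m_1\cdot m_2,0)=f(m_1\cdot m_2),
\end{equation*}
so the $r$-dependent cross terms drop out entirely. This same computation shows that $M\oplus 0$ is closed under $\bullet$, and since $\beta_\alpha((m,0))=(\alpha(m),0)\in M\oplus 0$, it is also $\beta_\alpha$-invariant; hence $(M\oplus 0,\bullet,\beta_\alpha)$ is genuinely a hom-associative subalgebra of the weak unitalization, with twisting map the restriction of $\beta_\alpha$.

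Finally I would check that $f$ intertwines the twisting maps: for any $m\in M$,
\begin{equation*}
\beta_\alpha(f(m))=\beta_\alpha((m,0))=(\alpha(m),0)=f(\alpha(m)),
\end{equation*}
so that $f\circ\alpha=\beta_\alpha\circ f$. Combined with the previous two steps, $f$ is a bijective morphism of hom-associative algebras, which establishes $(M,\cdot,\alpha)\cong(M\oplus 0,\bullet,\beta_\alpha)$.

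I do not anticipate any genuine obstacle here: the entire content is that the bilinear cross terms $r_1\cdot\alpha(m_2)$ and $r_2\cdot\alpha(m_1)$ appearing in \eqref{eq:bullet-mult} vanish identically on the zero-second-coordinate slice, so the restricted multiplication coincides with the original product $\cdot$ on $M$, and $\beta_\alpha$ restricts to $\alpha$. The only point worth stating carefully is that $M\oplus 0$ really is a hom-associative subalgebra, which the closure observations above supply for free.
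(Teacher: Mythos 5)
Your proof is correct and follows essentially the same approach as the paper, which uses the projection $\pi\colon M\oplus 0\to M$ (the inverse of your inclusion $f$) as the bijective algebra homomorphism and verifies the intertwining condition $\alpha\circ\pi=\pi\circ\beta_\alpha$ of \autoref{def:morphism}. Your version is if anything slightly more complete, since you spell out the vanishing of the cross terms in \eqref{eq:bullet-mult} and the $\beta_\alpha$-invariance of $M\oplus 0$, which the paper leaves implicit in asserting that $\pi$ is an algebra homomorphism.
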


\begin{proof} The projection map $\pi\colon M\oplus 0\to M$ is a bijective algebra homomorphism. For any $m\in M$, $\pi(\beta_\alpha(m,0))=\pi(\alpha(m),0)=\alpha(m)$ and $\alpha(\pi(m,0))=\alpha(m)$, therefore $\alpha\circ\pi=\pi\circ\beta_\alpha$, so by \autoref{def:morphism}, $(M\oplus 0,\bullet,\beta_\alpha)\cong (M,\cdot,\alpha)$.
\end{proof}

Using \autoref{cor:iso-bullet}, we identify $(M,\cdot,\alpha)$ with its image in $(M\oplus R,\bullet,\beta_\alpha)$, seeing the former as embedded in the latter.

\begin{lemma}\label{lem:weakly-unital-hom-ideal}All ideals in a weakly unital hom-associative algebra are hom-ideals.
\end{lemma}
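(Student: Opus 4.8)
The plan is to unwind the definitions and reduce the statement to a single containment. Recall that a hom-ideal is, by definition, an algebra ideal $I$ satisfying the extra condition $\alpha(I)\subseteq I$. Thus, starting from an arbitrary ideal $I$ of a weakly unital hom-associative algebra $A$, the only thing left to verify is that $I$ is invariant under the twisting map; the ideal property itself is assumed. The crucial tool is the weak unit: by \autoref{def:weak-hom}, there exists an element $e\in A$ such that $e\cdot a=a\cdot e=\alpha(a)$ for every $a\in A$.

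The key step is then to rewrite the action of $\alpha$ as an honest multiplication in the algebra. I would take any $a\in I$ and simply observe that $\alpha(a)=e\cdot a$. Since $e\in A$ and $I$ is a left ideal, the product $e\cdot a$ lies in $I$; hence $\alpha(a)\in I$. As $a$ was arbitrary, this gives $\alpha(I)\subseteq I$, which is precisely the hom-ideal condition, so $I$ is a hom-ideal.

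I expect there to be no real obstacle here: once the weak unit is brought in, the argument is essentially immediate, and it uses only one-sided containment. The left-ideal property alone suffices via $\alpha(a)=e\cdot a$, while the symmetric relation $\alpha(a)=a\cdot e$ together with the right-ideal property yields the same conclusion, so the choice of side is immaterial. The only point worth stressing is that weak unitality is exactly what converts the $\alpha$-invariance requirement into a product that the ideal already absorbs, which is why the classical phenomenon (ideals being automatically $\alpha$-stable) persists in this weaker setting.
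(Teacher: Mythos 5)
Your proof is correct and is essentially identical to the paper's: both use the weak unit to write $\alpha(a)=e\cdot a\in I$ for $a\in I$, giving $\alpha(I)\subseteq I$. Your additional remark that the right-sided relation $\alpha(a)=a\cdot e$ would work equally well is accurate but not needed.
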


\begin{proof} Let $I$ be an ideal, $a\in I$ and $e$ a weak unit in a hom-associative algebra. Then $\alpha(a)=e\cdot a\in I$, so $\alpha(I)\subseteq I$.
\end{proof}

A simple hom-associative algebra is always hom-simple, the hom-associative Weyl algebras in \autoref{ex:hom-weyl} being examples thereof. The converse is also true if the algebra has a weak unit, due to \autoref{lem:weakly-unital-hom-ideal}.

\begin{corollary} $(M,\cdot,\alpha)$ is a hom-ideal in $(M\oplus R,\bullet,\beta_\alpha)$.
\end{corollary}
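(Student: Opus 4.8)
The plan is to verify the two defining conditions of \autoref{def:hom-ideal} for the subalgebra $M\oplus 0$, with which $(M,\cdot,\alpha)$ has been identified via \autoref{cor:iso-bullet}: namely that $M\oplus 0$ is an ordinary two-sided algebra ideal of $M\oplus R$, and that it is invariant under the twisting map $\beta_\alpha$. The invariance is immediate from the definition of $\beta_\alpha$, since $\beta_\alpha((m,0))=(\alpha(m),0)\in M\oplus 0$ for every $m\in M$, whence $\beta_\alpha(M\oplus 0)\subseteq M\oplus 0$.

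For the ideal property I would take the most economical route available. Since the Weak Unitalization proposition shows that $(M\oplus R,\bullet,\beta_\alpha)$ is weakly unital with weak unit $(0,1)$, \autoref{lem:weakly-unital-hom-ideal} guarantees that every ideal of it is automatically a hom-ideal; thus it suffices to check that $M\oplus 0$ is an ordinary algebra ideal, after which the $\beta_\alpha$-invariance of the first paragraph is subsumed as a special case. To establish the ideal property itself, one can proceed in either of two ways. The concrete way is to compute $(m',r')\bullet(m,0)$ and $(m,0)\bullet(m',r')$ directly from \eqref{eq:bullet-mult} and observe that the second coordinate equals $r'\cdot 0=0\cdot r'=0$ in both cases, so that both products land in $M\oplus 0$. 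The more structural way is to note that $M\oplus 0$ is precisely the kernel of the projection $M\oplus R\to R$, $(m,r)\mapsto r$, which is an algebra homomorphism because the second coordinate of $(m_1,r_1)\bullet(m_2,r_2)$ is simply $r_1\cdot r_2$; the kernel of an algebra homomorphism is always a two-sided ideal.

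I do not expect a genuine obstacle here: the statement is essentially a bookkeeping verification that both coordinates behave as required, and most of the work has already been carried out in \autoref{prop:bullet-algebra}, the Weak Unitalization proposition, \autoref{cor:iso-bullet}, and \autoref{lem:weakly-unital-hom-ideal}. The only point that warrants a little care is remembering that membership in a hom-ideal demands \emph{both} the ideal condition and $\alpha$-invariance, together with correctly tracking which argument of \eqref{eq:bullet-mult} carries the module element when forming the products with an arbitrary $(m',r')\in M\oplus R$.
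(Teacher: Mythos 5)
Your proposal is correct and takes essentially the same route as the paper: the paper's proof likewise computes $(m_1,r_1)\bullet(m_2,0)$ and $(m_2,0)\bullet(m_1,r_1)$ directly from \eqref{eq:bullet-mult}, notes that the second coordinate vanishes so both products lie in $M\oplus 0$, and then invokes \autoref{lem:weakly-unital-hom-ideal} to conclude the ideal is automatically a hom-ideal. Your alternative kernel-of-the-projection argument and the separate verification of $\beta_\alpha$-invariance are correct but superfluous additions the paper does not need.
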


\begin{proof} For any $m_1,m_2$ and $r_1\in R$, $(m_1,r_1)\bullet (m_2,0)=(m_1\cdot m_2+r_1\cdot\alpha(m_2),0)\in M$, and $(m_2,0)\bullet (m_1,r_1)=(m_2\cdot m_1+r_1\cdot\alpha(m_2),0)\in M$, so $(M,\cdot,\alpha)$ is an ideal in a weakly unital hom-associative algebra, and by \autoref{lem:weakly-unital-hom-ideal} therefore also a hom-ideal.
\end{proof}

Recall that for a ring $R$, if there is a positive integer $n$ such that $n\cdot a=0$ for all $a\in R$, then the smallest such $n$ is the \emph{characteristic of the ring $R$}, $\characteristic(R)$. If no such positive integer exists, then one defines $\characteristic(R)=0$.

\begin{proposition}Let $R$ be a weakly unital hom-associative ring with weak unit $e$ and injective or surjective twisting map $\alpha$. If $n\cdot e\neq0$ for all $n\in\mathbb{Z}_{>0}$, then $\characteristic(R)=0$. If $n\cdot e=0$ for some $n\in\mathbb{Z}_{>0}$, then the smallest such $n$ is the characteristic of $R$.
\end{proposition}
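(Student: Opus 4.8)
The plan is to show that the set of positive integers $n$ satisfying $n\cdot e=0$ coincides with the set of positive integers $n$ satisfying $n\cdot a=0$ for all $a\in R$; both assertions about $\characteristic(R)$ then follow immediately from the definition of characteristic recalled above.

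First I would exploit biadditivity of the ring multiplication together with the weak-unit identity $e\cdot a=a\cdot e=\alpha(a)$ to compute, for any $n\in\mathbb{Z}_{>0}$ and any $a\in R$,
\begin{equation*}
(n\cdot e)\cdot a=n\cdot(e\cdot a)=n\cdot\alpha(a).
\end{equation*}
Consequently, if $n\cdot e=0$, then $n\cdot\alpha(a)=0$ for every $a\in R$.

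The main step is to upgrade $n\cdot\alpha(a)=0$ to $n\cdot a=0$ for all $a\in R$, and this is exactly where the hypothesis on $\alpha$ enters. If $\alpha$ is surjective, every element of $R$ has the form $\alpha(a)$, so $n\cdot b=0$ for all $b\in R$ follows directly. If instead $\alpha$ is injective, I would invoke additivity of $\alpha$ to write $\alpha(n\cdot a)=n\cdot\alpha(a)=0=\alpha(0)$, and injectivity then forces $n\cdot a=0$. The reverse implication is trivial: if $n\cdot a=0$ for all $a\in R$, then taking $a=e$ yields $n\cdot e=0$. This passage from $n\cdot\alpha(a)=0$ to $n\cdot a=0$ is the only nonroutine point, and the sole place where injectivity or surjectivity of $\alpha$ is used.

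Having established that the two sets of positive integers coincide, I would conclude as follows. If $n\cdot e\neq0$ for all $n\in\mathbb{Z}_{>0}$, then no positive integer annihilates all of $R$, so $\characteristic(R)=0$ by definition. Otherwise, the smallest $n$ with $n\cdot e=0$ equals the smallest positive integer annihilating $R$, which is precisely $\characteristic(R)$.
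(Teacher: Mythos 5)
Your proposal is correct and follows essentially the same route as the paper's own proof: both use $\mathbb{Z}$-bilinearity of the multiplication and the weak-unit identity to get $n\cdot\alpha(a)=(n\cdot e)\cdot a=0$, then handle the surjective case by writing $a=\alpha(b)$ and the injective case via $\alpha(n\cdot a)=n\cdot\alpha(a)=0=\alpha(0)$. Your reorganization (first identifying the set of annihilating integers with the set of $n$ killing $e$, then reading off both claims) is only a cosmetic repackaging of the paper's argument.
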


\begin{proof}If $n\cdot e\neq0$ for all $n\in\mathbb{Z}_{>0}$, then clearly we cannot have $n\cdot a=0$ for all $a\in R$, and hence $\characteristic(R)=0$. Now assume $n$ is a positive integer such that $n\cdot e=0$. If $\alpha$ is injective, then for all $a\in R$,
\begin{equation*}
\alpha(n\cdot a)=n\cdot\alpha(a)=n\cdot (e\cdot a)=(n\cdot e)\cdot a=0\cdot a=0\iff n\cdot a=0.
\end{equation*}
On the other hand, if $\alpha$ is surjective, then for all $a\in R$, $a=\alpha(b)$ for some $b\in R$, and hence $n\cdot a=n\cdot\alpha(b)=n\cdot(e\cdot b)=(n\cdot e)\cdot b=0\cdot b=0.$
\end{proof}

\begin{proposition} Let $R:=(M,\cdot,\alpha)$ be a hom-associative ring, and define
\begin{equation*}
S:=\begin{cases}(M\oplus\mathbb{Z},\bullet,\beta_\alpha),&\text{if } \characteristic(R)=0,\\
(M\oplus\mathbb{Z}_n,\bullet,\beta_\alpha),&\text{if }\characteristic(R)=n.
 \end{cases}
\end{equation*}
Then the weak unitalization $S$ of $R$ has the same characteristic as $R$.
\end{proposition}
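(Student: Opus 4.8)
The plan is to reduce the entire statement to the additive structure, since the characteristic of a ring depends only on its underlying additive group and not on the multiplication or the twisting map. First I would observe that the weak unitalization $S$ has underlying additive group the direct sum $M\oplus B$, where $B=\mathbb{Z}$ when $\characteristic(R)=0$ and $B=\mathbb{Z}_n$ when $\characteristic(R)=n$; consequently multiplication by a positive integer $k$ acts componentwise, $k\cdot(m,r)=(k\cdot m,k\cdot r)$. Hence a positive integer $k$ satisfies $k\cdot s=0$ for every $s\in S$ if and only if $k\cdot m=0$ for all $m\in M$ and $k\cdot r=0$ for all $r\in B$ simultaneously. Note that neither $\beta_\alpha$ nor the precise form of $\bullet$ enters here, so there is no need to invoke the earlier characteristic proposition, nor any injectivity or surjectivity of $\alpha$.

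Next I would record the two elementary facts that control the two summands. The set of integers annihilating an abelian group additively is an ideal of $\mathbb{Z}$ whose unique non-negative generator is, by definition, the characteristic; thus $k\cdot m=0$ for all $m\in M$ precisely when $\characteristic(R)$ divides $k$, and likewise $\characteristic(\mathbb{Z})=0$ while $\characteristic(\mathbb{Z}_n)=n$. The embedding $m\mapsto(m,0)$ of \autoref{cor:iso-bullet} shows that any $k$ annihilating all of $S$ annihilates $M$ in particular, since $k\cdot(m,0)=(k\cdot m,0)$; this is what lets the $M$-summand force the characteristic downward.

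With these in hand the result splits into the two defining cases. If $\characteristic(R)=0$, then $B=\mathbb{Z}$ and no positive integer annihilates $M$, so a fortiori none annihilates $S$, giving $\characteristic(S)=0=\characteristic(R)$. If $\characteristic(R)=n$, then $B=\mathbb{Z}_n$, and since $n\cdot m=0$ for all $m\in M$ while $n$ annihilates $\mathbb{Z}_n$, the integer $n$ annihilates all of $S$; conversely, any positive $k<n$ fails to annihilate $M$ because $k<\characteristic(R)$, hence fails on $S$, so $n$ is the least such integer and $\characteristic(S)=n$.

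The computation is routine; the only real content is the case-dependent choice of base ring, and the sole mild obstacle is keeping the degenerate definition of characteristic $0$ straight. Indeed, the purpose of adjoining $\mathbb{Z}_n$ rather than $\mathbb{Z}$ in the case $\characteristic(R)=n$ becomes transparent here: had one used $\mathbb{Z}$, its summand would never be annihilated by any positive integer and $S$ would acquire characteristic $0$, destroying the agreement with $R$.
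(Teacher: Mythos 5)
Your proof is correct and is exactly the argument the paper has in mind: its entire proof reads ``This follows immediately by using the definition of the characteristic,'' and your write-up simply fills in those routine details (componentwise integer action on $M\oplus B$, the annihilator ideal of $\mathbb{Z}$, and the two cases). Nothing in your approach diverges from the paper; you correctly note that neither $\beta_\alpha$ nor $\bullet$ nor the earlier proposition about weak units and injective/surjective $\alpha$ is needed.
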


\begin{proof}This follows immediately by using the definition of the characteristic.
\end{proof}

The main conclusion to draw from this section is that any multiplicative hom-associative algebra can be seen as a multiplicative, weakly unital hom-associative algebra by its weak unitalization. The converse, that any weakly unital hom-associative algebra is necessarily multiplicative if also $\alpha(e)=e$, where $e$ is a weak unit, should be known. However, since we have not been able to find this statement elsewhere, we provide a short proof of it here for the convenience of the reader.

\begin{proposition}If $e$ is a weak unit in a weakly unital hom-associative algebra $A$, and $\alpha(e)=e$, then $A$ is multiplicative.
\end{proposition}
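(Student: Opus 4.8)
The plan is to verify multiplicativity directly, namely to show that $\alpha(a\cdot b)=\alpha(a)\cdot\alpha(b)$ for all $a,b\in A$, by exploiting the two defining features at our disposal: the weak unit identity $e\cdot x=x\cdot e=\alpha(x)$ (valid for every $x\in A$) and the hom-associativity relation \eqref{eq:hom-condition}. The key observation is that the weak unit lets us realize $\alpha$ as left multiplication by $e$, so that every instance of $\alpha$ can be traded for a product with $e$, at which point hom-associativity can be brought to bear.

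Concretely, first I would rewrite the left-hand side using the weak unit with $x=a\cdot b$, obtaining $\alpha(a\cdot b)=e\cdot(a\cdot b)$. Next I would apply hom-associativity \eqref{eq:hom-condition} with the triple $(e,a,b)$, which reads $\alpha(e)\cdot(a\cdot b)=(e\cdot a)\cdot\alpha(b)$. This is precisely the step where the hypothesis $\alpha(e)=e$ is essential: it collapses the left-hand side of this relation to $e\cdot(a\cdot b)$, matching the expression we already have for $\alpha(a\cdot b)$. Finally, invoking the weak unit once more on the factor $e\cdot a=\alpha(a)$ turns the right-hand side into $\alpha(a)\cdot\alpha(b)$. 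Chaining these equalities yields
\begin{equation*}
\alpha(a\cdot b)=e\cdot(a\cdot b)=\alpha(e)\cdot(a\cdot b)=(e\cdot a)\cdot\alpha(b)=\alpha(a)\cdot\alpha(b),
\end{equation*}
which is exactly multiplicativity.

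I do not expect a genuine obstacle here; the argument is essentially a single application of hom-associativity bracketed by uses of the weak unit. The only point requiring care is the placement of $e$ in the leftmost slot of the hom-associativity axiom, so that the twisting $\alpha(e)$ lands on $e$ and the hypothesis $\alpha(e)=e$ can be applied cleanly. It is worth emphasizing that this hypothesis is what makes the approach work: without $\alpha(e)=e$ one would be left with $\alpha(e)\cdot(a\cdot b)$ on one side and $e\cdot(a\cdot b)$ on the other, with no reason for these to agree, so the condition $\alpha(e)=e$ is not merely convenient but is the crux of the proof.
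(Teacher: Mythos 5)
Your proof is correct and is essentially identical to the paper's: both realize $\alpha(a\cdot b)$ as $e\cdot(a\cdot b)=\alpha(e)\cdot(a\cdot b)$ via the weak unit and the hypothesis $\alpha(e)=e$, then apply hom-associativity to the triple $(e,a,b)$ and use $e\cdot a=\alpha(a)$ to conclude $\alpha(a)\cdot\alpha(b)$. No gaps; your one-line chain of equalities is just the paper's two displayed computations merged into a single sequence.
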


\begin{proof} For any $a,b\in A$, $\alpha(e)\cdot\left(a\cdot b\right)=e\cdot\left(a\cdot b\right)=\alpha\left(a\cdot b\right)$. Using hom-associativity, $\alpha(e)\cdot\left(a\cdot b\right)=\left(e\cdot a\right)\cdot \alpha(b)=\alpha(a)\cdot \alpha(b)$.
\end{proof}

\noindent {\bf Acknowledgment.} We would like to thank Lars Hellstr\"om for discussions leading to some of the results presented in the article.

\end{document}